\documentclass[twoside]{article}

\usepackage[accepted]{aistats2025}
%
%


\usepackage[round]{natbib}

\bibliographystyle{apalike}

\newcommand{\R}{{\mathbb R}}
\newcommand{\N}{{\mathbb N}}
\newcommand{\Var}{\text{Var}}
\newcommand{\E}{\mathbb E}
\renewcommand{\P}{\mathcal P}
\newcommand{\sgn}{\text{sgn}}

\usepackage{times}
\usepackage{dsfont}
\usepackage{graphicx}
\usepackage{aliascnt}
\usepackage{amssymb}
\usepackage[colorlinks=true,linkcolor=blue,citecolor=blue]{hyperref}

\usepackage{amsthm}
\newtheorem{theorem}{Theorem}
\newtheorem{lemma}{Lemma}

%

%

\begin{document}

\twocolumn[

\aistatstitle{Efficient Estimation of a Gaussian Mean  with Local Differential Privacy}

\runningtitle{Efficient Estimation of a Gaussian Mean  with Local Differential Privacy}

\aistatsauthor{ Nikita Kalinin \And Lukas Steinberger }

\aistatsaddress{Institute of Science and Technology Austria (ISTA) \And  University of Vienna } ]

\begin{abstract}
  In this paper, we study the problem of estimating the unknown mean $\theta$ of a unit variance Gaussian distribution in a locally differentially private (LDP) way. In the high-privacy regime ($\epsilon\le 1$), we identify an optimal privacy mechanism that minimizes the variance of the estimator asymptotically. Our main technical contribution is the maximization of the Fisher-Information of the sanitized data with respect to the local privacy mechanism $Q$. We find that the exact solution $Q_{\theta,\epsilon}$ of this maximization is the sign mechanism that applies randomized response to the sign of $X_i-\theta$, where $X_1,\dots, X_n$ are the confidential iid original samples. However, since this optimal local mechanism depends on the unknown mean $\theta$, we employ a two-stage LDP parameter estimation procedure which requires splitting agents into two groups. The first $n_1$ observations are used to consistently but not necessarily efficiently estimate the parameter $\theta$ by $\tilde{\theta}_{n_1}$. Then this estimate is updated by applying the sign mechanism with $\tilde{\theta}_{n_1}$ instead of $\theta$ to the remaining $n-n_1$ observations, to obtain an LDP and efficient estimator of the unknown mean.%
\end{abstract}

\section{Introduction}

We consider the problem of estimating the unknown mean of a unit variance normal distribution in a locally differentially private and statistically efficient way. In this scenario, we have $n$ agents or data-holders, each of which owns data $X_i$ sampled independently from the probability distribution $P_\theta = N(\theta, 1)$ which depends on the unknown mean parameter $\theta \in \mathbb{R}$. Denote the class of all those potential data generating distributions by $\P=\{P_\theta:\theta\in\R\}$. We protect the privacy of the data owners by local differential privacy. This means that each agent generates a sanitized version $Z_i$ of their original data $X_i$ independently of everybody else by applying a privacy mechanism $Q$, which is a Markov kernel or a conditional distribution of $Z_i$ given $X_i=x$. In other words, $Q(A|x) = P(Z_i\in A|X_i=x)$. From the privacy mechanism $Q$ we can generate random variates in some arbitrary space $\mathcal{Z}$ endowed with a sigma algebra $\mathcal{G}$. We say that $Q$ satisfies the $\epsilon$-local differential privacy property if
\begin{equation}
Q(A|x) \le e^\epsilon Q(A|x')
\end{equation}
for all $x,x'\in\R$ and all events $A\in\mathcal{G}$. We denote by $\mathcal{Q}_\epsilon = \mathcal{Q}_\epsilon(\R)$ the set of all possible such privacy mechanisms taking inputs from $\R$.

After each agent applied the privacy mechanism $Q$ we obtain iid samples $Z_1, \dots Z_n$ from the distribution $QP_\theta$, which formally is the probability measure obtained by integrating $Q$ with respect to $P_\theta(dx)$. We can only use these samples to estimate the unknown parameter $\theta$ and our goal is to do this with the smallest amount of estimation variance possible, that is, to estimate $\theta$ in a statistically efficient way. Standard asymptotic theory \citep[cf.][Chapter~8]{vanderVaart07} shows that a maximum likelihood estimator $\hat{\theta}_n^{(mle)}$ based on $Z_1,\dots, Z_n$ achieves an asymptotic normal distribution 
\begin{equation}
\sqrt{n}(\hat{\theta}_n^{(mle)}-\theta) \xrightarrow[n\to\infty]{d} N(0, I_\theta(Q\P)^{-1})
\end{equation}
where the inverse of the Fisher-Information 
\begin{equation}
I_\theta(Q\P):=\E_{Z \sim QP_\theta}\left[\left(\frac{\partial \log QP_\theta(Z)}{\partial \theta}\right)^2\right]
\end{equation}
is the smallest variance achievable by regular estimators \citep[][Theorem~8.8]{vanderVaart07}. Thus, to minimize asymptotic variance, we should solve the optimization problem
\begin{equation}\label{eq:Opt}
\max_Q I_\theta(Q\P)\quad\text{subject to}\quad Q\in\mathcal Q_\epsilon.
\end{equation}
This is a very challenging optimization problem because the set of Markov kernels $\mathcal Q_\epsilon$ is a huge, non-parametric set and we have to even search over all possible output spaces $(\mathcal Z,\mathcal G)$ from which $Q$ can draw $Z_i$. Furthermore, it turns out that for fixed output space $(\mathcal Z,\mathcal G)$ the mapping $Q\mapsto I_\theta(Q\P)$ is convex, but we are trying to maximize this objective function. Hence, we may be confronted with numerous local and global optima at the boundary of the feasible set $\mathcal Q_\epsilon$. \citet{Steinberger24} provided a numerical scheme for approximately solving \eqref{eq:Opt} based on a linear program representation of \citet{Kairouz16}. However, the runtime of the linear program scales exponentially in the accuracy of approximation. \citet{Steinberger24} also showed that indeed, $[\sup_{Q\in\mathcal Q_\epsilon} I_\theta(Q\P)]^{-1}$ is the smallest possible asymptotic variance among all (sequentially interactive) locally differentially private and regular estimators. Here, regular means that the scaled estimation error $\sqrt{n}(\hat{\theta}_n-\theta_n)$ converges in distribution to a limiting distribution along the sequence of true parameters $\theta_n=\theta+h/\sqrt{n}$ for all $h \in \R$ and such that the limiting distribution does not depend on $h$ (see \citet[Section~8.5]{vanderVaart07} and Theorem~\ref{thm:efficiency} below). Regularity is indeed necessary to establish the mentioned lower bound on the asymptotic variance. Without regularity one could still show that the lower bound holds at least for all $\theta\in\R$ outside of a Lebesgue null set (see \citet[Theorem~8.8]{vanderVaart07} for the classical case and the discussion at the end of Section~3 in \citet{Steinberger24} for the locally private case), but certain pathologies may arise for irregular estimators \citep[see][Example~2.5 and Example~2.7]{Lehmann98}. Fortunately, we can show in Theorem~\ref{thm:efficiency} below that our proposed locally private estimator is also regular and achieves the asymptotic lower bound.

Our main contribution is to show that the following simple \textbf{sign mechanism} is an exact solution of \eqref{eq:Opt} in the case where the privacy parameter $\epsilon$ is sufficiently small: Given that $X_i=x$, we generate $Z_i$ by

\begin{equation}
\label{def:Z_i}
Z_i = \begin{cases}
    \sgn(x - \theta) \hspace{0.2cm} \text{with probability}\; p_\epsilon \\
     -\sgn(x - \theta) \hspace{0.2cm} \text{with probability}\; 1-p_\epsilon,
\end{cases} 
\end{equation}
where $p_\epsilon := \frac{e^\epsilon}{1 + e^\epsilon}$ and $\sgn(0) := 1$. In other words, for $z\in \mathcal Z = \{-1,1\}$ and $x\in\R$, we set 
\begin{equation}\label{eq:sgnMech}
\begin{aligned}
Q_{\theta,\epsilon}^{\text{sgn}}(\{z\}|x) :&= P(Z_i=z|X_i=x)\\ 
&= 
\begin{cases}
p_\epsilon, &\text{if } z=\sgn(x-\theta),\\
1-p_\epsilon, &\text{if }z\neq\sgn(x-\theta).
\end{cases}
\end{aligned}
\end{equation} 
Notice that this mechanism can also be represented as a composition $Q_{\theta,\epsilon}^{\text{sgn}} = Q^{RR}_\epsilon T_\theta$, where $T_\theta(x) := \sgn(x-\theta)$ and $Q^{RR}_\epsilon$ is the randomized response mechanism of \citet{Warner65} that flips the sign of the input with probability $1-p_\epsilon$. In particular, we easily conclude that $Q_{\theta,\epsilon}^{\text{sgn}} \in\mathcal Q_\epsilon$. We can now state and prove the following result.

\begin{theorem}\label{thm:main}
If $\epsilon\le 1.04$ then
\begin{equation}
I_\theta(Q\P) \le I_\theta(Q_{\theta,\epsilon}^{\text{sgn}}\P) = \frac{2}{\pi} \frac{(e^\epsilon - 1)^2}{(e^\epsilon + 1)^2},     
\end{equation}
for all $\theta\in\R$ and all $Q\in\mathcal Q_\epsilon$. In particular, the sign mechanism solves \eqref{eq:Opt}.
\end{theorem}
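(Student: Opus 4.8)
The plan is to first reduce to $\theta=0$: the Gaussian location family and the constraint $Q\in\mathcal Q_\epsilon$ are translation invariant and $I_\theta(Q_{\theta,\epsilon}^{\text{sgn}}\P)$ does not depend on $\theta$, so it suffices to treat $\theta=0$; write $\phi,\Phi$ for the standard normal density and distribution function. The value is then a direct computation: under $Q_{0,\epsilon}^{\text{sgn}}$ the output $Z\in\{-1,1\}$ is Bernoulli with $P_\vartheta(Z=1)=(1-p_\epsilon)+(2p_\epsilon-1)\Phi(\vartheta)$, so the Fisher information at $\vartheta=0$ equals $4\bigl((2p_\epsilon-1)\phi(0)\bigr)^2=\frac2\pi(2p_\epsilon-1)^2=\frac2\pi\frac{(e^\epsilon-1)^2}{(e^\epsilon+1)^2}$. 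It then remains to show $I_0(Q\P)\le\frac2\pi\frac{(e^\epsilon-1)^2}{(e^\epsilon+1)^2}$ for all $Q\in\mathcal Q_\epsilon$ whenever $\epsilon\le 0.67$.

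For the upper bound I would start from $I_0(Q\P)=\int_{\mathcal Z}N(z)^2/M(z)\,\nu(dz)$, where $\nu:=QP_0$, $q:=dQ(\cdot|x)/d\nu$, $M(z)=\int\phi(x)q(x,z)\,dx$ and $N(z)=\int x\phi(x)q(x,z)\,dx$. Since $Q\mapsto I_0(Q\P)$ is convex, the maximum sits at an extreme point of $\mathcal Q_\epsilon$, and by the extreme-point / linear-program description of $\mathcal Q_\epsilon$ (Kairouz et al., as used by \citet{Steinberger24}) this reduces the problem to \emph{staircase} mechanisms, i.e.\ $q(\cdot,z)\in\{c_z,e^\epsilon c_z\}$ with $c_z=\inf_x q(x,z)$. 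Put $A_z:=\{x:q(x,z)=e^\epsilon c_z\}$, $\rho(dz):=c_z\nu(dz)$, $\alpha(A):=\int_A x\phi$ and $|A|_\phi:=\int_A\phi$; then $I_0(Q\P)=(e^\epsilon-1)^2\int\alpha(A_z)^2/(1+(e^\epsilon-1)|A_z|_\phi)\,\rho(dz)$, while the requirement that $Q(\cdot|x)$ be a probability measure for every $x$ becomes the \emph{covering identity} $\rho(\{z:x\in A_z\})=\frac{1-\rho(\mathcal Z)}{e^\epsilon-1}=:\kappa$ for all $x\in\R$.

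The crux is to exploit this constraint. A rearrangement argument — for fixed $|A_z|_\phi$, $|\alpha(A_z)|$ is largest when $A_z$ is a half-line — lets one assume each $A_z$ is a half-line (discarding the useless cases $A_z\in\{\emptyset,\R\}$). Sending $x\to\pm\infty$ in the covering identity then forces $\rho(\mathcal Z)=\frac2{e^\epsilon+1}$ (hence $\kappa=\frac1{e^\epsilon+1}$) and shows the half-line endpoints, on either side, follow a common distribution $\mu$ on $\R$, so the objective collapses to $\frac{(e^\epsilon-1)^2}{e^\epsilon+1}\int_\R\bigl(F(t)+F(-t)\bigr)\,d\mu(t)$, where $F(t):=\phi(t)^2/(1+(e^\epsilon-1)(1-\Phi(t)))$ and the sign mechanism is the case $\mu=\delta_0$. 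Since $\int(F(t)+F(-t))\,d\mu\le\sup_t(F(t)+F(-t))$ and $\frac{(e^\epsilon-1)^2}{e^\epsilon+1}\cdot 2F(0)=\frac2\pi\frac{(e^\epsilon-1)^2}{(e^\epsilon+1)^2}=I_0(Q_{0,\epsilon}^{\text{sgn}}\P)$, the theorem reduces to the single-variable inequality $F(t)+F(-t)\le 2F(0)$ for all $t\in\R$. A short Taylor expansion shows $t=0$ is \emph{always} a strict local maximizer of $t\mapsto F(t)+F(-t)$ (the relevant second-derivative sign condition reduces to $\frac{(e^\epsilon-1)^2}{(e^\epsilon+1)^2}<\frac\pi2$, which always holds), so the real content is global optimality, and this is precisely where the hypothesis enters: the inequality holds for all $t$ up to a transcendental threshold $\epsilon^\ast$ — the largest $\epsilon$ for which a competing two-sided half-line mechanism with nonzero endpoint does not overtake the sign mechanism, determined by $F(t^\ast)+F(-t^\ast)=2F(0)$ together with $\frac{d}{dt}(F(t)+F(-t))|_{t^\ast}=0$ — and numerically $\epsilon^\ast\approx 0.67$.

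I expect two steps to carry most of the difficulty. The first is making the reduction to staircase mechanisms rigorous over the \emph{continuous} input space $\R$: the Kairouz extreme-point description is stated for finite input alphabets, so one needs an approximation/compactness argument to pass to $\R$ and to ensure the supremum is essentially attained among staircase mechanisms with finitely many output symbols. The second, and main, obstacle is the rearrangement: a priori the $A_z$ are arbitrary measurable sets coupled through the covering identity, and one must show they can be replaced by half-lines without decreasing $I_0$ \emph{and} while preserving the covering identity — this is the combinatorial heart of the argument. Granting these, what remains is the explicit but somewhat delicate calculus of verifying $F(t)+F(-t)\le 2F(0)$ for $\epsilon\le\epsilon^\ast$ and pinning down the constant $\epsilon^\ast\approx 0.67$.
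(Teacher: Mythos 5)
Your calculation of the value $I_\theta(Q_{\theta,\epsilon}^{\text{sgn}}\P)$ and the reduction to $\theta=0$ are correct, and you rightly anticipate the need for a discretization step to make the Kairouz staircase/extreme-point theory applicable (the paper does exactly this, quantizing $\R$ into $k$ bins via $T_{k,\theta}$ and sending $k\to\infty$). Where your argument diverges is everything after the staircase reduction: the paper does \emph{not} pass to half-lines and a single-variable problem; instead it writes the discretized problem as the linear program \eqref{eq:LP} with $2^k$ columns indexed by \emph{all} binary patterns $A_z\subseteq[k]$ and exhibits a dual certificate $\beta^*$ whose feasibility $(S^{(k)})^T\beta^*\ge\vec\mu$ must be checked against \emph{every} pattern, not just monotone (half-line) ones.

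This difference is not cosmetic, and your route has a genuine gap. The rearrangement step --- replacing each $A_z$ by a half-line of equal $\phi$-measure while \emph{preserving the covering identity} $\rho(\{z:x\in A_z\})\equiv\kappa$ --- cannot be carried out in general. A two-output counterexample: take $A_{z_1}=(0,1]$, $A_{z_2}=\R\setminus(0,1]$ with $\rho(\{z_1\})=\rho(\{z_2\})$; the original pair satisfies the covering identity, but after replacing each $A_z$ by a half-line of equal $\phi$-mass the coverage is no longer constant in $x$. More decisively, if the rearrangement \emph{were} always possible, your reduction would establish the theorem for \emph{all} $\epsilon$, because the single-variable inequality $F(t)+F(-t)\le 2F(0)$ holds for every $\epsilon>0$ and every $t$, with no threshold. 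Indeed, writing $u=e^\epsilon-1$ and $p=\Phi(t)$, the inequality is algebraically equivalent to
\begin{equation*}
u^2\bigl(e^{-t^2}-4p(1-p)\bigr)+4(u+1)\bigl(e^{-t^2}-1\bigr)\le 0,
\end{equation*}
and both bracketed terms are nonpositive (the first because $2\pi\phi(t)^2=e^{-t^2}\le 4\Phi(t)\Phi(-t)$, with equality only at $t=0$). So there is no ``transcendental threshold $\epsilon^\ast\approx 0.67$'' hiding in $F(t)+F(-t)\le 2F(0)$; that inequality is free. Since the sign mechanism is demonstrably \emph{not} optimal for large $\epsilon$ (already a four-level quantizer beats $\tfrac{2}{\pi}$ for $\epsilon$ large), the reduction to half-lines must lose generality, and the constraint $\epsilon\le 0.67$ is actually driven by the non-monotone patterns $A_z$. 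In the paper's dual-feasibility calculation this is visible: the bound deteriorates through the terms $m_1^2/k^2$, $m_2^2/k^2$ --- contributions from patterns whose ``$e^\epsilon$''-indices are scattered rather than forming an initial or terminal segment --- and the condition $\tfrac{4t_\epsilon^2}{1-2t_\epsilon-2t_\epsilon^2}\le\pi$ that yields $\epsilon\le 0.67$ arises precisely from controlling these. Your plan as written cannot reproduce that constraint and, if the rearrangement step were filled in as hoped, would prove too much.
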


Our proof is based on the idea of quantizing the normal distribution and solving the problem of maximizing the private Fisher Information for discrete distributions simultaneously for all quantization levels. We apply results from \cite{Kairouz16} and a delicate duality argument to solve the discrete case. The main steps of the proof are collected in Subsection~\ref{sec:proof-main}.

An obvious issue with the sign mechanism in \eqref{eq:sgnMech} is that it depends on the unknown parameter $\theta$ which we want to estimate. By incorporating the sign mechanism into a two-stage scheme as in \citet{Steinberger24} we end up with the following procedure for asymptotically efficient parameter estimation: Split the agents into two groups of size $n_1$ and $n_2:=n - n_1$ such that $\lim \frac{n_1}{n} = 0$ and $n_1(n) \to \infty$.\footnote{We parametrize $n_1=n_1(n)$ by $n$ such that all limits are understood as $n\to\infty$.} Use the first group to get a consistent but not necessarily efficient estimate of the unknown mean $\theta$ and the second group to update the first-stage estimate to make it asymptotically efficient. The two stages of the procedure are described below, where $\theta_0$ is an arbitrary initial guess for the unknown mean $\theta$.

\begin{enumerate}
    \item 
    \begin{itemize}
    \item Apply the sign mechanism $Q^{\text{sgn}}_{\theta_0,\epsilon}$ at the initial value $\theta_0$ to $X_1,\dots, X_{n_1}$ to obtain iid sanitized data $Z_i\thicksim Q_{\theta_0,\epsilon}^{\text{sgn}}P_\theta$, $i=1,\dots, n_1$. 
    
    \item Compute 
    \begin{equation}
    \tilde{\theta}_{n_1} = 
    \theta_0 - \Phi^{-1}\left(\frac12 - \frac12\frac{e^\epsilon + 1}{e^\epsilon - 1}\bar{Z}_{n_1}\right), 
    \label{eq:est1}
    \end{equation}
    if   $|\bar{Z}_{n_1}|<\frac{e^\epsilon - 1}{e^\epsilon + 1}$, otherwise $\tilde{\theta}_{n_1} =\theta_0$,
    where $\Phi$ is the cumulative distribution function of the standard normal distribution and $\bar{Z}_{n_1} = \frac{1}{n_1}\sum\limits_{i = 1}^{n_1}Z_i$. 
    \end{itemize}
    \item 
    \begin{itemize}
    \item Apply the sign mechanism $Q^{\text{sgn}}_{\tilde{\theta}_{n_1},\epsilon}$ at the first stage estimate $\tilde{\theta}_{n_1}$ to the data $X_{n_1+1},\dots, X_n$ in the second group to generate values $Z_i \thicksim Q^{\text{sgn}}_{\tilde{\theta}_{n_1},\epsilon}P_\theta$, $i=n_1+1,\dots, n$.
    
     \item Compute 
    \begin{equation}
    \hat{\theta}_{n} = 
    \tilde{\theta}_{n_1} - \Phi^{-1}\left(\frac12 - \frac12\frac{e^\epsilon + 1}{e^\epsilon - 1}\bar{Z}_{n_2}\right),
    \label{eq:est2}
    \end{equation}
if   $|\bar{Z}_{n_2}|<\frac{e^\epsilon - 1}{e^\epsilon + 1}$, otherwise $\hat{\theta}_{n} = \tilde{\theta}_{n_1}$
where $\bar{Z}_{n_2} = \frac{1}{n_2}\sum\limits_{i = n_1+1}^{n}Z_i$.
\end{itemize}
\end{enumerate}

Relying on our main Theorem~\ref{thm:main}, we show that the above two-stage estimation procedure is regular and asymptotically achieves minimal variance. Let $R_\theta$ denote the distribution of the full sanitized data $Z_1,\dots, Z_n$ when the true unknown mean of the $X_i$ is $\theta\in\R$. Thus, $R_{\theta+h/\sqrt{n}}$ is the distribution of the sanitized data when the true unknown mean of the $X_i$ is $\theta_n:= \theta + h/\sqrt{n}$. Hence, on top of asymptotic normality with optimal asymptotic variance (i.e., the case $h=0$), the following result also establishes regularity of the locally private estimator sequence $\hat{\theta}_n$. Please refer to Appendix \ref{sec:prof-efficiency} for the proof.

\begin{theorem}\label{thm:efficiency}
If $\epsilon\le 1.04$ then the two-stage locally private estimation procedure described above satisfies
\begin{equation*}\sqrt{n} \left(\hat{\theta}_n-\theta-\frac{h}{\sqrt{n}}\right) \xrightarrow[n\to\infty]{R_{\theta+\frac{h}{\sqrt{n}}}} N\left(0, \left[\sup_{Q \in \mathcal{Q}_\epsilon} I_\theta(Q\P)\right]^{-1}\right),
\end{equation*}
 for any value $\theta_0$, $h$ and $n_1 = o(n)$ such that $n_1(n) \to \infty$.
\end{theorem}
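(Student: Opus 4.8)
The plan is to exploit that both stages of the procedure are method-of-moments inversions of the same one-dimensional statistic, and then to combine them by conditioning on the first-stage estimate. Fix $\theta,h\in\R$ and write $\vartheta_n:=\theta+h/\sqrt n$, $c_\epsilon:=\frac{e^\epsilon+1}{e^\epsilon-1}$, and $g(u):=\Phi^{-1}\!\left(\tfrac12+\tfrac{c_\epsilon}{2}u\right)$ for $|u|<1/c_\epsilon$. Using $\Phi^{-1}(1-u)=-\Phi^{-1}(u)$, the estimators \eqref{eq:est1} and \eqref{eq:est2} can be written $\tilde\theta_{n_1}=\theta_0+g(\bar Z_{n_1})$ and $\hat\theta_n=\tilde\theta_{n_1}+g(\bar Z_{n_2})$ on the events $\{|\bar Z_{n_1}|<1/c_\epsilon\}$ and $\{|\bar Z_{n_2}|<1/c_\epsilon\}$ respectively. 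A direct computation from \eqref{eq:sgnMech} shows that if $X\sim N(\vartheta,1)$ is passed through $Q^{\text{sgn}}_{t,\epsilon}$, the sanitized output $Z\in\{-1,1\}$ has mean $\mu_t(\vartheta):=\tfrac{1}{c_\epsilon}\bigl(2\Phi(\vartheta-t)-1\bigr)$ and variance $1-\mu_t(\vartheta)^2$, which is bounded away from $0$ uniformly in $t,\vartheta$; the crucial algebraic fact is the exact inversion $g(\mu_t(\vartheta))=\vartheta-t$, and $g,g',g''$ are continuous and bounded on a neighbourhood of $0$.

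First I would record that the first stage is consistent: under $R_{\vartheta_n}$ the variables $Z_1,\dots,Z_{n_1}$ are iid with mean $\mu_{\theta_0}(\vartheta_n)\to\mu_{\theta_0}(\theta)\in(-1/c_\epsilon,1/c_\epsilon)$ and uniformly bounded variance, so Chebyshev gives $\bar Z_{n_1}\to\mu_{\theta_0}(\theta)$ in probability, the event in \eqref{eq:est1} has probability tending to $1$, and continuity of $g$ with $g(\mu_{\theta_0}(\theta))=\theta-\theta_0$ yields $\tilde\theta_{n_1}\xrightarrow{R_{\vartheta_n}}\theta$ (here we use $n_1\to\infty$); mere consistency will suffice. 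Next I would analyse the second stage conditionally on $\tilde\theta_{n_1}$. By sample-splitting, given $\tilde\theta_{n_1}=t$ the variables $Z_{n_1+1},\dots,Z_n$ are iid with mean $\mu_t(\vartheta_n)$ and variance $1-\mu_t(\vartheta_n)^2$, and on the event in \eqref{eq:est2} the exact inversion $g(\mu_t(\vartheta_n))=\vartheta_n-t$ gives
\[
\sqrt{n_2}\,(\hat\theta_n-\vartheta_n)=g'\bigl(\mu_t(\vartheta_n)\bigr)\,\sqrt{n_2}\,\bigl(\bar Z_{n_2}-\mu_t(\vartheta_n)\bigr)+O_P(n_2^{-1/2}),
\]
the remainder being the second-order Taylor term of $g$, controlled by the bound on $g''$ near $0$ once $|t-\theta|$ and $1/\sqrt n$ are small (off an event of probability $\to0$). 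This is precisely where the centering at $\vartheta_n$ rather than at $\theta$, with an $h$-free variance, is produced automatically. The CLT for the bounded iid array — with uniform control of the characteristic function using $|Z_i|\le1$ — then shows that for each $s\in\R$ the conditional characteristic function of $\sqrt{n_2}(\hat\theta_n-\vartheta_n)$ converges, uniformly over $|t-\theta|\le\delta$, to $\exp\!\bigl(-\tfrac{s^2}{2}\,g'(\mu_t(\theta))^2(1-\mu_t(\theta)^2)\bigr)$, which by continuity at $t=\theta$, where $g'(0)^2(1-0)=2\pi(c_\epsilon/2)^2=\tfrac{\pi}{2}\tfrac{(e^\epsilon+1)^2}{(e^\epsilon-1)^2}=:V$, is within some $\omega(\delta)\to0$ of $e^{-s^2V/2}$.

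Then I would de-condition: for fixed $s$ and $\gamma>0$, choosing $\delta$ so that $\omega(\delta)<\gamma/2$ and then $n$ large enough that the CLT and Taylor errors drop below $\gamma/2$, and using $P(|\tilde\theta_{n_1}-\theta|>\delta)\to0$, one gets $\E\bigl[e^{is\sqrt{n_2}(\hat\theta_n-\vartheta_n)}\mid\tilde\theta_{n_1}\bigr]\to e^{-s^2V/2}$ in $R_{\vartheta_n}$-probability; bounded convergence upgrades this to convergence of the unconditional characteristic function, and Lévy's continuity theorem gives $\sqrt{n_2}(\hat\theta_n-\vartheta_n)\xrightarrow{R_{\vartheta_n}}N(0,V)$. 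Multiplying by $\sqrt{n/n_2}\to1$ (using $n_1/n\to0$) yields $\sqrt n\,(\hat\theta_n-\vartheta_n)\xrightarrow{R_{\vartheta_n}}N(0,V)$ for every $h\in\R$, i.e.\ the procedure is regular with $h$-free limit. Finally, Theorem~\ref{thm:main} identifies $V=\bigl[I_\theta(Q^{\text{sgn}}_{\theta,\epsilon}\P)\bigr]^{-1}=\bigl[\sup_{Q\in\mathcal Q_\epsilon}I_\theta(Q\P)\bigr]^{-1}$ whenever $\epsilon\le0.67$, which is the advertised limit.

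The step I expect to be the main obstacle is the de-conditioning, i.e.\ carrying enough uniformity in $\tilde\theta_{n_1}$ through the Taylor expansion and the CLT so that the conditional characteristic function converges in probability rather than only along almost every realization of $\tilde\theta_{n_1}$; this is the standard technical cost of sample splitting and is exactly where the argument of \citet{Steinberger24} can be reused. By contrast, the moment identities for \eqref{eq:sgnMech}, the exact inversion $g(\mu_t(\vartheta))=\vartheta-t$, the Taylor remainder bound, and the evaluation $g'(0)^2=V$ are all routine. It is worth noting that $\epsilon\le0.67$ enters the proof only via Theorem~\ref{thm:main}, to certify that $V$ is the smallest achievable asymptotic variance; the distributional convergence itself holds for every $\epsilon>0$.
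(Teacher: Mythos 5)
Your plan is correct and mirrors the paper's proof at every structural step: first-stage consistency via a moment bound, a conditional Taylor expansion of $\Phi^{-1}$ about the conditional mean (the exact inversion $g(\mu_t(\vartheta))=\vartheta-t$ you note is the same algebraic fact the paper uses via $G(B_n)=\tilde\theta_{n_1}-\theta_n$), a conditional CLT for the bounded iid second-stage data, de-conditioning, and Theorem~\ref{thm:main} to identify the limiting variance. The only genuine difference is the tool you propose for de-conditioning: a characteristic-function argument with a uniformity-in-$t$ claim, bounded convergence, and L\'evy's continuity theorem, and you rightly flag that uniformity as the technical obstacle. The paper sidesteps this by applying a quantitative Berry--Esseen bound to the conditional distribution of the standardized sum: because the conditional variances and third absolute moments of the centred $Y_i$'s are bounded above and below by $\epsilon$-dependent constants, the Berry--Esseen error is $O(n_2^{-1/2})$ uniformly in $\tilde\theta_{n_1}$, so the needed uniformity is automatic and one can simply take expectations of the conditional CDFs. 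Your route would work, but swapping in Berry--Esseen makes the ``main obstacle'' you anticipate collapse to a one-line estimate, which is exactly what the paper does.
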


Notice, however, that the two-stage estimator $\hat{\theta}_n$ for a finite $n$ depends on the choice of initial value $\theta_0$ and on the size $n_1$ of the first group. Hence, $\theta_0$ and $n_1$ are tuning parameters in our estimation procedure. In Section~\ref{sec:sim}, we investigate the impact of $\theta_0$ and $n_1$ on the performance of our locally private estimator in a finite sample simulation study.

\subsection{Related Literature}

Local differential privacy originated about 20 years ago \citep[see][]{Dinur03, Dwork04, Dwork06, Dwork08a, Evfim03} and has since become an incredibly popular way of data privacy protection when there is no trusted third party available. Despite its popularity, some of the most basic learning problems, such as the one considered here, have not been fully solved under LDP. Much attention has recently been paid to discrete distribution estimation and deriving upper bounds on the estimation error. For instance, \cite{Wang16} have suggested an optimal mechanism for mutual information maximization for discrete distributions under LDP. \cite{Ye18} are studying the locally private minimax risk for discrete distributions. \cite{Nam22} is perhaps closer to our work. They consider the problem of maximizing a Fisher-Information in the local privacy mechanism, but they restrict to the 1-bit communication constraint which, for our purpose, is an oversimplified case. \citet{Barnes20} also investigate and bound Fisher-Information when original data are perturbed using a differentially private randomization mechanism. But they have not attempted to find an optimal randomization mechanism or an efficient estimation procedure. Gaussian mean estimation is considered by \cite{Jos19} and they also use a two-stage procedure for parameter estimation. They obtain high-probability bounds for the estimation error, which is of order $n^{-1/2}$ for the two-stage protocol, but unlike our work, they do not obtain the optimal asymptotic constant. \citet{Duchi19} proposed a one-stage algorithm for Gaussian mean estimation, which resembles our first-stage estimator. While their method can outperform a two-stage protocol, this advantage holds primarily when the initial guess  $\theta_0$ is close to the true parameter $\theta$ and the sample size is relatively small. \citet{Asi22} introduced the PrivUnitG algorithm, designed specifically for high-dimensional problems where the data resides on the unit sphere. While their method is effective in these settings, it is worth noting that for one-dimensional data, where the values are 
$\pm 1$, their approach differs from the randomized response, which is provably optimal for this problem. Consequently, their method is inefficient for one-dimensional cases. Building on these insights, \citet{Asi23} developed the ProjUnit framework, which further optimizes mean estimation by projecting data into lower-dimensional subspaces before applying an optimal randomizer. A recent work by \citet{liu2023online} introduced a continual, gradient-like procedure to iteratively refine the estimate of $\theta$. They have shown that the procedure achieves a variance that is optimal among binary mechanisms. However, we will show that this variance is asymptotically optimal among all LDP mechanisms.

The conceptual foundation to the problem of finding the optimal Fisher-Information mechanism was laid by \citet{Steinberger24}. However, while in that work the optimal mechanism and locally private estimation procedure have to be approximated by a computationally expensive numerical optimization routine, we here derive an exact and simple closed-form privacy mechanism and estimator.
\section{Proof of main results}
\label{sec:main}

\subsection{Proof of Theorem~\ref{thm:main}}
\label{sec:proof-main}
In this section, we prove Theorem~\ref{thm:main}. We first follow the approach of \citet{Steinberger24} and quantize the Fisher-Information to then solve a discrete version of \eqref{eq:Opt}. However, while \citet{Steinberger24} had to rely on a numerical algorithm for the discrete optimization, which has exponential runtime in the resolution of the discretization, we here solve all the discrete problems exactly by providing a closed form solution and show that it is the same for all resolution levels $k$ of the approximation. We then conclude that the solution to the simplified discrete problems is also the global solution. Notice that all the regularity conditions imposed by \citet{Steinberger24} are satisfied for the Gaussian location model $\P = \{N(\theta,1):\theta\in\R\}$ that we consider here \citep[cf. Section~5.2 in][]{Steinberger24}. In particular, the Fisher-Information $I_\theta(Q\P)$ is well defined and finite for any privacy mechanism $Q\in\mathcal Q_\epsilon$.

\subsubsection{Discrete approximation}

We begin by defining what is called a consistent quantizer in \citet[cf. Definition~3 and Section~5 in that reference]{Steinberger24}. For an even positive integer $k$ and for $j=1,\dots, k-1$, let $B_j := (x_{j-1}, x_j]$ and $B_k := (x_{k-1}, \infty)$, where $x_j:=\Phi^{-1}(j/k)$ and $\Phi$ is the cdf of the standard normal distribution. Now set $T_{k,\theta}(x) := \sum_{j=1}^k j\mathds 1_{B_j}(x-\theta)$. Notice that $T_{k,\theta}$ maps the real line $\R$ into the discrete set $[k]:=\{1,\dots, k\}$, hence, it is called a quantizer. We write $T_{k,\theta_0}\P := \{r_{\theta,\theta_0}:\theta\in\R\}$ for the resulting quantized model, where $r_{\theta,\theta_0}(j) := P_\theta( T_{k,\theta_0}^{-1}(\{j\}) ) = P_{\theta}(B_j+\theta_0) = \Phi(x_j+\theta_0-\theta) - \Phi(x_{j-1}+\theta_0-\theta)$ is the probability mass function of the quantized data $T_{k,\theta_0}(X_i)$ and we set $\Phi(-\infty):=0$, $\Phi(\infty):=1$, $\phi(x) := \Phi'(x)$ and $\phi(\pm\infty):=0$. Lemmas~4.7, 4.9, 5.1 and 5.2 in \citet{Steinberger24} show that for every even $k$ and every $\theta\in\R$
\begin{align}\label{eq:quantization}
\sup_{Q\in\mathcal Q_\epsilon(\R)} I_\theta(Q\P) &\le 
\sup_{Q\in\mathcal Q_\epsilon([k])} I_\theta(QT_{k,\theta}\P) + \Delta_k
\\
&=\sup_{Q\in\mathcal Q_\epsilon([k]\to[k])} I_\theta(QT_{k,\theta}\P) + \Delta_k,
\end{align}
where $\Delta_k\to0$ as $k\to\infty$, $\mathcal Q_\epsilon([k])$ is the set of $\epsilon$-private mechanisms that take inputs from $[k]$ and produce random outputs from some arbitrary measurable space $(\mathcal Z,\mathcal G)$, and $\mathcal Q_\epsilon([k]\to[k])$ are the $\epsilon$-private mechanisms whose inputs and outputs both take values in $[k]$. Thus, the elements of $\mathcal Q_\epsilon([k]\to[k])$ can be represented as $k\times k$ column stochastic matrices with the property that $Q_{ij}\le e^\epsilon Q_{ij'}$, for all $i, j, j'\in[k]$. Since obviously $\sup_{Q\in\mathcal Q_\epsilon([k]\to[k])} I_\theta(QT_{k,\theta}\P)\le \sup_{Q\in\mathcal Q_\epsilon} I_\theta(Q\P)$, \eqref{eq:quantization} yields
$$
\sup_{Q\in\mathcal Q_\epsilon([k]\to[k])} I_\theta(QT_{k,\theta}\P) \to \sup_{Q\in\mathcal Q_\epsilon} I_\theta(Q\P), \quad\forall\theta\in\R,
$$
as $k\to\infty$, $k$ even. In the remainder of the proof, we will show that $\sup_{Q\in\mathcal Q_\epsilon([k]\to[k])} I_\theta(QT_{k,\theta}\P)=I_\theta(Q_{\theta,\epsilon}^{\text{sgn}}\P)$ for every even $k$. Thus, the sign mechanism in \eqref{eq:sgnMech} must be a solution of \eqref{eq:Opt}.

\subsubsection{Evaluating the quantized objective function}

Next, we provide an explicit expression for the quantized private Fisher-Information $I_\theta(QT_{k,\theta_0}\P)$, for arbitrary $Q\in \mathcal Q_\epsilon([k]\to[k])$. The quantized and $Q$-privatized model $QT_{k,\theta_0}\P = \{m_{\theta,\theta_0}:\theta\in\R\}$ can be expressed via its probability mass functions $m_{\theta,\theta_0}(i) := \sum_{j=1}^k Q_{ij}r_{\theta,\theta_0}(j)$, $i\in[k]$. Thus, we have
\begin{align*}
I_\theta(QT_{k,\theta_0}\P) &= \E_{Z\thicksim QT_{k,\theta_0}P_\theta}\left[\left( \frac{\partial \log m_{\theta,\theta_0}(Z)}{\partial \theta}\right)^2 \right]\\
&=
\sum_{i=1}^k \frac{\dot{m}_{\theta,\theta_0}(i)^2}{m_{\theta,\theta_0}(i)},
\end{align*}
where $\dot{m}_{\theta,\theta_0}(i) := \frac{\partial}{\partial\theta}m_{\theta,\theta_0}(i) = \sum_{j=1}^k Q_{ij}[\phi(x_{j-1}+\theta_0-\theta) - \phi(x_{j}+\theta_0-\theta)]$ and the ratio is to be understood as equal to zero if the denominator $m_{\theta,\theta_0}(i)$ is zero. Abbreviating $y_j:= \phi(x_{j-1}) - \phi(x_{j})$, we arrive at
\begin{equation}\label{eq:FI}
I_\theta(QT_{k,\theta}\P) = \sum_{i=1}^k \frac{\left( \sum_{j=1}^k Q_{ij}y_j \right)^2}{\frac1k \sum_{j=1}^k Q_{ij}} = \sum_{i=1}^k \mu(Q_{i\cdot}^T),
\end{equation}
for 
$
\mu(v) := k\frac{(v^Ty)^2}{v^T\vec{1}},\quad v\in\mathcal C_k := \{u\in\R_+^k:u_j\le e^\epsilon u_{j'}, \forall j,j'\in[k]\},
$
and $\mu(0):=0$. In particular, we see that $I_\theta(QT_{k,\theta}\P)=I_0(QT_{k,0}\P)$ for all $\theta\in\R$. For later use we note that $y_j = - y_{k-j+1}$, $\sum_{j=1}^{k/2} y_j = -\phi(0) = -\sum_{j=k/2+1}^{k} y_j$ and $\sum_{j=1}^{k} y_j=0$.

We conclude this subsection by computing the Fisher-Information for the sign mechanism in \eqref{eq:sgnMech}. Use the fact that $Q_{\theta,\epsilon}^{\text{sgn}} = Q_\epsilon^{RR}\circ T_{2,\theta}\in\mathcal Q_\epsilon([2]\to[2])$, where the randomized response mechanism can be represented in matrix form as
\begin{equation}
Q_\epsilon^{RR} = \begin{pmatrix}
\frac{e^\epsilon}{1+e^\epsilon} & \frac{1}{1+e^\epsilon}\\
\frac{1}{1+e^\epsilon} & \frac{e^\epsilon}{1+e^\epsilon}
\end{pmatrix}.
\end{equation}
Thus, 
\begin{align*}
I_\theta(Q_{\theta,\epsilon}^{\text{sgn}}\P) &= I_\theta(Q_\epsilon^{RR}T_{2,\theta}\P) = \sum_{i=1}^2 \frac{\left( \sum_{j=1}^2 [Q_\epsilon^{RR}]_{ij}y_j \right)^2}{\frac12 \sum_{j=1}^2 [Q_\epsilon^{RR}]_{ij}} \\
&=
\frac{1}{1+e^\epsilon} \left( \frac{ \phi(0)^2(1-e^\epsilon)^2}{\frac12 (1+e^\epsilon)}
+ 
\frac{ \phi(0)^2(e^\epsilon-1)^2}{\frac12 (1+e^\epsilon)}
\right)\\
&=
\frac{2}{\pi} \frac{(e^\epsilon-1)^2}{(e^\epsilon+1)^2}.
\end{align*}

\subsubsection{Reformulation as a Linear Program}

From Lemma~4.5 of \citet{Steinberger24}, $Q\mapsto I_0(QT_{k,0}\P)$ is a continuous, sublinear and convex function on the compact set $\mathcal Q_\epsilon([k]\to[k])\subseteq\R^{k\times k}$. Therefore, a maximizer exists and Theorems~2 and 4 of \cite{Kairouz16} allow us to conclude that the maximization problem 
\begin{align}\label{eq:approx}
\max_Q I_0(QT_{k,0}\P)\quad\text{subject to}\quad Q\in\mathcal Q_\epsilon([k]\to[k])
\end{align}
has the same optimal value as the linear program
\begin{equation}
\begin{aligned}
\max_{\alpha \in \mathbb{R}^{2^k}} \quad & \sum\limits_{j = 1}^{2^k} \mu(S^{(k)}_{\cdot j}) \alpha_j \\
\textrm{s.t.} \quad & S^{(k)}\alpha = \vec{1} \\
  &\alpha\geq0,    \\
\end{aligned}\label{eq:LP}
\end{equation}
where $S^{(k)}$ is a staircase matrix defined as follows: For $ 0 \le j \le 2^k - 1$ consider its binary representation $b_{j}\in\{0,1\}^k$ with $j = \sum_{i = 1}^{k}b_{ij}2^{k - i}$. Then $(S^{(k)})_{i,j+1} := b_{ij} (e^\epsilon - 1) + 1$. For instance, with $k = 3$ we get the following matrix:

\setcounter{MaxMatrixCols}{16}

$$
S^{(3)} = \begin{bmatrix}
1 & 1 & 1 & 1 &  e^\epsilon & e^\epsilon & e^\epsilon & e^\epsilon\\
1 & 1  &e^\epsilon & e^\epsilon & 1 & 1 &e^\epsilon & e^\epsilon \\
1 &  e^\epsilon &  1 & e^\epsilon & 1 & e^\epsilon & 1 & e^\epsilon\\ 
\end{bmatrix}.
$$
Moreover, if $\alpha^*$ is a solution of \eqref{eq:LP}, then $Q^*= [S^{(k)}\text{diag}(\alpha^*)]^T\in\R^{2^k\times k}$ is a solution of \eqref{eq:approx}. From \citet[][Theorem~2]{Kairouz16} we know that an optimal mechanism $Q^*$ has at most $k$ non-zero rows. Since zero rows do not contribute to the Fisher-Information \eqref{eq:FI}, we can remove them from $Q^*$ to obtain $Q^*\in\R^{k\times k}$. 

Finally, notice that $\alpha_j^*  := (1+e^\epsilon)^{-1}$ for $j\in\{2^{k/2},2^k-2^{k/2}+1\}$ and $\alpha_j^* := 0$ else, is a feasible point of \eqref{eq:LP}, because $b_{2^{k/2}} = \vec{1} - b_{2^k-2^{k/2}+1}$ and thus
\begin{align*}
S^{(k)}\alpha^* &= \frac{1}{1+e^\epsilon}\left[(b_{2^{k/2}} + b_{2^k-2^{k/2}+1})(e^\epsilon-1) + 2 \cdot \vec{1} \right]\\
&= \vec{1}.
\end{align*}
Furthermore, $\alpha^*$ achieves an objective function value of
\begin{align*}
&\frac{1}{1+e^\epsilon}\left[\mu(b_{2^{k/2}}(e^\epsilon-1) + \vec{1} )+\mu(b_{2^k-2^{k/2}+1}(e^\epsilon-1) + \vec{1})\right] \\
&\quad=\frac{2}{1+e^\epsilon}\left[\frac{\phi(0)^2(e^\epsilon-1)^2}{e^\epsilon+1} + \frac{\phi(0)^2(e^\epsilon-1)^2}{e^\epsilon+1}\right]\\
&\quad=I_\theta(Q_{\theta,\epsilon}^{\text{sgn}}\P).
\end{align*}
Hence, we have $\sup_{Q\in\mathcal Q_\epsilon([k]\to[k])} I_\theta(QT_{k,\theta}\P) = \sup_{Q\in\mathcal Q_\epsilon([k]\to[k])} I_0(QT_{k,0}\P)\ge I_\theta(Q_{\theta,\epsilon}^{\text{sgn}}\P)$, for every even $k>0$. It remains to establish an upper bound, which we do by a duality argument.

\subsubsection{Guessing a dual solution}

In this section, we study the dual of the LP in \eqref{eq:LP}. By weak duality, any feasible value of the dual provides an upper bound on the objective function of the primal problem. Hence, the challenge is to identify a feasible value of the dual that achieves an objective function equal to $I_\theta(Q_{\theta,\epsilon}^{\text{sgn}}\P)$. The dual program reads as follows:

\begin{equation}\label{eq:dual}
\begin{aligned}
\min_{\beta \in \mathbb{R}^k} \quad&\vec{1}^T \beta\\
\textrm{s.t.} \quad &(S^{(k)})^T\beta \ge \vec{\mu}
\end{aligned}
\end{equation}
where $\vec{\mu}:= (\mu_j)_{j=1}^{2^k} := (\mu(S^{(k)}_{\cdot j}))_{j=1}^{2^k}\in\R^{2^k}$. The proof of Theorem~\ref{thm:main} is finished if we can identify a feasible point $\beta\in\R^k$ of \eqref{eq:dual} satisfying $\vec{1}^T \beta=\sum_{j=1}^k \beta_j = \frac{2}{\pi} t_\epsilon^2$, where $t_\epsilon:= \frac{e^\epsilon-1}{e^\epsilon+1}$. Our guess for such a $\beta^*\in\R^k$ is
\begin{equation}
\beta_j^* := -\frac{2t_\epsilon^2}{\pi k} + |y_j|t_\epsilon^2 \sqrt{\frac{8}{\pi}}.
\end{equation}
This clearly satisfies $\vec{1}^T \beta^*= \frac{2}{\pi} t_\epsilon^2$. It is the main technical challenge of the proof to show that $\beta^*$ is a feasible point of \eqref{eq:dual}. 

For fixed $j\in[2^k]$, we have to verify $(S_{\cdot j}^{(k)})^T\beta^* \ge \mu(S_{\cdot j}^{(k)})$. It will be convenient to partition the index set $[k]$ as follows
$[k] =  I_{1}^0 \cup I_{e^\epsilon}^{0} \cup I_{1}^1 \cup I_{e^\epsilon}^{1}$ where $I_{1}^0 = \{i|\; i \le \frac{k}{2}, S_{i,j}^{(k)} = 1\}$, $I_{e^\epsilon}^0 = \{i|\; i \le \frac{k}{2}, S_{i,j}^{(k)} = e^\epsilon\}$ and $I_{1}^{1}, I_{e^\epsilon}^{1}$ defined similarly for the second half of indices $i=\frac{k}{2}+1,\dots, k$. Then we can rewrite the required inequality $(S_{\cdot j}^{(k)})^T\beta^* \ge \mu(S_{\cdot j}^{(k)})$ as follows:

\begin{align*}
&\sum\limits_{i \in I_{1}^0} \beta_i^* + e^\epsilon\sum\limits_{i \in I_{e^\epsilon}^0} \beta_i^* + \sum\limits_{i \in I_{1}^1} \beta_i^* + e^\epsilon\sum\limits_{i \in I_{e^\epsilon}^1} \beta_i^*  \\
&\ge  \frac{\left(\sum\limits_{i \in I_{1}^0} y_i + e^\epsilon\sum\limits_{i \in I_{e^\epsilon}^0} y_i + \sum\limits_{i \in I_{1}^1} y_i + e^\epsilon\sum\limits_{i \in I_{e^\epsilon}^1} y_i\right)^2}{\frac{1}{k}\left(|I_1^0| + |I_1^1| + e^\epsilon|I_{e^\epsilon}^0| + e^\epsilon|I_{e^\epsilon}^1|\right)}.
\end{align*}
For convenience let us denote $m_1 = |I_1^0|$, $m_2 = |I_{e^\epsilon}^1|$, $a_1 = \sqrt{2\pi}\sum\limits_{i \in I_1^0} |y_i|$, $a_2 = \sqrt{2\pi} \sum\limits_{i \in I_{e^\epsilon}^1} |y_i|$ such that $m_1, m_2 \le \frac{k}{2}$ and $a_1, a_2 \in [0, 1]$. With this new notation, we can rewrite the sum on the left-hand-side as
\begin{equation}\label{eq:LHS}
(1 - e^\epsilon) \sum\limits_{i \in I_{1}^0} \beta_i^* + (e^\epsilon - 1) \sum\limits_{i \in I_{e^\epsilon}^1} \beta_i^* + e^\epsilon\sum\limits_{i = 1}^{k/2} \beta_i^* + \sum\limits_{i = k/2 + 1}^{k} \beta_i^* 
\end{equation}
and the ratio on the right-hand-side as
\begin{equation}\label{eq:RHS}
\frac{\left(\frac{e^\epsilon - 1}{\sqrt{2\pi}}a_1 + \frac{e^\epsilon - 1}{\sqrt{2\pi}} a_2 - \frac{ e^\epsilon }{\sqrt{2\pi}} + \frac{ 1 }{\sqrt{2\pi}}\right)^2}{\frac{1}{k}\left(k/2 + m_1 - m_2 + e^\epsilon(k/2 + m_2 - m_1)\right)}.
\end{equation}
Substituting the values for $\beta^*$ we simplify the expression in \eqref{eq:LHS} to obtain

\begin{align*}
&(1 - e^\epsilon) \sum\limits_{i \in I_{1}^0} \beta_i^* + (e^\epsilon - 1) \sum\limits_{i \in I_{e^\epsilon}^1} \beta_i^* + e^\epsilon\sum\limits_{i = 1}^{k/2} \beta_i^* + \sum\limits_{i = k/2 + 1}^{k} \beta_i^* \\
&\quad= (1 - e^\epsilon)\frac{2t_\epsilon^2}{\pi} \left(-\frac{m_1}{k} + a_1\right)\\
&\quad\quad+ (e^\epsilon - 1) \frac{2t_\epsilon^2}{\pi} \left(-\frac{m_2}{k} + a_2\right) + (1 + e^\epsilon) \frac{t_\epsilon^2}{\pi}  \\
&\quad= (e^\epsilon - 1)\frac{2t_\epsilon^2}{\pi}\left[\frac{m_1 - m_2}{k} + a_2 - a_1\right] + (1 + e^\epsilon) \frac{t_\epsilon^2}{\pi}.
\end{align*}
Next, we simplify \eqref{eq:RHS} to get 

\begin{align*}
    &\frac{\left(\frac{e^\epsilon - 1}{\sqrt{2\pi}}a_1 + \frac{e^\epsilon - 1}{\sqrt{2\pi}} a_2 - \frac{ e^\epsilon }{\sqrt{2\pi}} + \frac{ 1 }{\sqrt{2\pi}}\right)^2}{\frac{1}{k}\left(\frac{k}{2} + m_1 - m_2 + e^\epsilon(\frac{k}{2} + m_2 - m_1)\right)}\\
&\quad= \frac{\left(a_1 + a_2 - 1\right)^2(e^\epsilon - 1)^2}{\frac{2\pi}{k}\left(\frac{k}{2}(1 + e^\epsilon) + (m_2 - m_1)(e^\epsilon - 1)\right)} \\
&\quad=\frac{(a_1 + a_2 - 1)^2(e^\epsilon - 1)^2}{\pi(1 + e^\epsilon) \left(1 + 2t_\epsilon \frac{m_2 - m_1}{k}\right)}.
\end{align*}
After these simplifications, we see that the inequality we need to verify is given by
\begin{align*}
    &(e^\epsilon - 1)\frac{2t_\epsilon^2}{\pi}\left[\frac{m_1 - m_2}{k} + a_2 - a_1\right] + (1 + e^\epsilon) \frac{t_\epsilon^2}{\pi}\\  
    &\quad\ge \frac{(a_1 + a_2 - 1)^2(e^\epsilon - 1)^2}{\pi(1 + e^\epsilon) \left(1 + 2t_\epsilon \frac{m_2 - m_1}{k}\right)}.
\end{align*}
Dividing by $(1 + e^\epsilon)t_\epsilon^2 / \pi$, this can be further simplified to
\begin{equation}
\begin{aligned}\label{eq:finalIneq}
&\mathcal{L}\left(a_1, a_2, \frac{m_1}{k}, \frac{m_2}{k}\right) := (a_2 - a_1)\left[2t_\epsilon  + 4t_\epsilon^2 \frac{m_2 - m_1}{k}\right]\\
&\quad-4t_\epsilon^2 \left(\frac{m_2 - m_1}{k}\right)^2 - (a_1 + a_2 - 1)^2 + 1 \ge 0.
\end{aligned}
\end{equation}
Without loss of generality, we can assume $a_1 + a_2 \le 1$; otherwise, we could have chosen $a_1 = \sqrt{2\pi}\sum\limits_{i \in I^1_1} |y_i|$ and $a_2 =\sqrt{2\pi}\sum\limits_{i \in I_{e^\epsilon}^0} |y_i|$, maintaining the same inequality due to the symmetry of $\beta_i^* = \beta_{k + 1 - i}^*$ and antisymmetry in $y_i = -y_{k + 1 - i}$ , but with the sum $a_1 + a_2 \le 1$, because the total sum equals $\sqrt{2\pi}\sum\limits_{i = 1}^{k}|y_i| = 2$. Given this, we can prove the inequality $\mathcal{L}\left(a_1, a_2, \frac{m_1}{k}, \frac{m_2}{k}\right) \ge 0$ using the following four lemmas.

\begin{lemma}
For any $k$, the sequence $y_j = \phi(\Phi^{-1}((j - 1)/k)) - \phi(\Phi^{-1}(j/k))$, $j=1,\dots, k$, is increasing.
\end{lemma}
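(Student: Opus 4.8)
The idea is to recognize the asserted monotonicity of $(y_j)_{j=1}^k$ as an immediate consequence of the concavity of the auxiliary function $g(u) := \phi(\Phi^{-1}(u))$. First I would set up notation: extend $g$ from $(0,1)$ to $[0,1]$ by putting $g(0) := g(1) := 0$. This extension is continuous, since $\Phi^{-1}(u)\to -\infty$ as $u\to 0^+$ and $\Phi^{-1}(u)\to +\infty$ as $u\to 1^-$, while $\phi(\pm\infty)=0$. With $x_j = \Phi^{-1}(j/k)$ and the convention $\phi(\pm\infty)=0$ used throughout the paper, one then has the uniform formula $y_j = g((j-1)/k) - g(j/k)$ for all $j\in[k]$, the boundary cases $j=1$ and $j=k$ included.

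The second step is to verify that $g$ is concave on $[0,1]$. On the open interval $(0,1)$ the chain rule together with the identity $\phi'(x) = -x\phi(x)$ gives
$$
g'(u) = \phi'(\Phi^{-1}(u))\cdot\frac{1}{\phi(\Phi^{-1}(u))} = -\Phi^{-1}(u),
$$
and therefore $g''(u) = -1/\phi(\Phi^{-1}(u)) < 0$ for every $u\in(0,1)$. Hence $g$ is strictly concave on $(0,1)$, and since it is continuous on the compact interval $[0,1]$, it is concave on all of $[0,1]$.

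Finally, applying concavity of $g$ at the midpoint of $[(j-1)/k,\,(j+1)/k]$ yields $2g(j/k) \ge g((j-1)/k) + g((j+1)/k)$ for $j = 1,\dots, k-1$ (for $j=1$ and $j=k-1$ this uses $g(0)=g(1)=0$). Rearranging this inequality,
$$
y_{j+1} - y_j = \Big(g(j/k) - g((j+1)/k)\Big) - \Big(g((j-1)/k) - g(j/k)\Big) = 2g(j/k) - g((j-1)/k) - g((j+1)/k) \ge 0,
$$
which is exactly the claim that $(y_j)$ is increasing.

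I do not expect a genuine obstacle in this argument; the computation $g'(u) = -\Phi^{-1}(u)$ is the crux and it is short. The only place demanding a little care is the behaviour at the endpoints $u\in\{0,1\}$, where $\Phi^{-1}$ is infinite: one must check that $g$ extends continuously there, that the representation $y_j = g((j-1)/k)-g(j/k)$ stays valid for $j=1$ and $j=k$, and that the midpoint-concavity inequality still applies at $j=1$ and $j=k-1$ using $g(0)=g(1)=0$.
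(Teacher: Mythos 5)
Your proof is correct, and it rests on the same computational kernel as the paper's, namely the identity $\phi'(x)=-x\,\phi(x)$ combined with the inverse function theorem, which yields $\frac{d}{du}\,\phi(\Phi^{-1}(u))=-\Phi^{-1}(u)$. The organization, however, is genuinely different. The paper interpolates the \emph{difference} sequence itself: it sets $g(x)=\phi(\Phi^{-1}((x-1)/k))-\phi(\Phi^{-1}(x/k))$ on $[1,k]$, computes $g'(x)=\tfrac1k\bigl(\Phi^{-1}(x/k)-\Phi^{-1}((x-1)/k)\bigr)>0$ because $\Phi^{-1}$ is increasing, and reads off monotonicity of $(y_j)$ directly. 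You instead establish that the single function $h(u):=\phi(\Phi^{-1}(u))$ is concave on $[0,1]$ (its derivative $-\Phi^{-1}(u)$ is decreasing), and then convert concavity at the grid midpoint into the second-difference inequality $2h(j/k)\ge h((j-1)/k)+h((j+1)/k)$, which is precisely $y_{j+1}\ge y_j$. Both arguments are correct and of comparable length; your concavity framing has the minor advantage of treating the endpoint indices $j=1$ and $j=k$ uniformly through the continuous extension $h(0)=h(1)=0$, whereas the paper patches in a separate definition of $g(1)$ (whose printed form, incidentally, has a missing closing parenthesis). The only point in your write-up worth making explicit is the routine step that strict concavity on the open interval $(0,1)$ together with continuity on $[0,1]$ gives concavity on the closed interval, so that the midpoint inequality is licensed at $j=1$ and $j=k-1$.
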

\begin{proof}
For $x\in(1, k]$, define $g(x) := \phi(\Phi^{-1}((x - 1)/k)) - \phi(\Phi^{-1}(x/k))$ and $g(1):=-\phi(\Phi^{-1}(1/k)$. Since $g$ is continuous on $[1, k]$, it suffices to show that $g$ is strictly increasing on $(1, k)$. Using the fact that $\phi'(x) = (-x)\phi(x)$ and the inverse function theorem we get
\begin{align*}
g'(x) &= \frac1k\left( \frac{\phi'}{\phi}\circ\Phi^{-1}((x-1)/k)-\frac{\phi'}{\phi}\circ\Phi^{-1}(x/k)\right)\\
&=
\frac1k\left( \Phi^{-1}(x/k)-\Phi^{-1}((x-1)/k)\right)>0.
\end{align*}
Thus, $g$ is strictly increasing on $(1,k)$.
\end{proof}

\begin{lemma}
For $x \in [0, \frac{1}{2}]$ we have
\begin{equation}
\phi(0) - \phi\left(\Phi^{-1}\left(\frac{1}{2} + x\right)\right)   \ge \sqrt{\frac{\pi}{2}}x^2.
\end{equation}
\end{lemma}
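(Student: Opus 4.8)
The plan is to reduce the inequality to an elementary second-order Taylor estimate for the function
$f(x) := \phi(0) - \phi\bigl(\Phi^{-1}(\tfrac12 + x)\bigr)$, $x\in[0,\tfrac12]$,
where at the right endpoint we use the conventions from the quantization setup, $\Phi^{-1}(1)=\infty$ and $\phi(\infty)=0$, so that $f(\tfrac12)=\phi(0)$. The function $f$ is continuous on $[0,\tfrac12]$, smooth on $(0,\tfrac12)$, and satisfies $f(0)=0$ since $\Phi^{-1}(\tfrac12)=0$. It thus suffices to prove $f(x)\ge\sqrt{\pi/2}\,x^2$ on $(0,\tfrac12)$ and then extend to the endpoints by continuity.

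First I would compute the derivatives of $f$ on $(0,\tfrac12)$. Writing $u(x):=\Phi^{-1}(\tfrac12+x)$, the inverse function theorem gives $u'(x)=1/\phi(u(x))$, and combining this with the identity $\phi'(t)=-t\,\phi(t)$ yields
$f'(x) = -\phi'(u(x))\,u'(x) = u(x) = \Phi^{-1}(\tfrac12+x)$,
so in particular $f'(0^+)=0$. Differentiating once more, $f''(x) = u'(x) = 1/\phi\bigl(\Phi^{-1}(\tfrac12+x)\bigr)$.

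The only inequality needed is that $\phi$ attains its maximum at the origin, i.e. $\phi(t)\le\phi(0)=1/\sqrt{2\pi}$ for all $t$; hence $f''(x)\ge\sqrt{2\pi}$ on $(0,\tfrac12)$. Since $f(0)=0$ and $f'(0^+)=0$, integrating the bound on $f''$ twice (the integrals are proper near $0$ because $f''$ is bounded there, tending to $\sqrt{2\pi}$) gives $f'(x)\ge\sqrt{2\pi}\,x$ and then $f(x)\ge\sqrt{2\pi}\,x^2/2=\sqrt{\pi/2}\,x^2$ for $x\in(0,\tfrac12)$. Passing to $x\in\{0,\tfrac12\}$ by continuity of both sides completes the argument. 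There is essentially no obstacle: the only mildly delicate point is the divergence of $\Phi^{-1}$ at $x=\tfrac12$, which is dealt with cleanly by the continuity step, and one should note that the constant matches exactly because $\sqrt{2\pi}/2=\sqrt{\pi/2}$.
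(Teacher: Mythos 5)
Your proof is correct and follows essentially the same route as the paper's: both compute $f'(x)=\Phi^{-1}(\tfrac12+x)$ and $f''(x)=1/\phi(\Phi^{-1}(\tfrac12+x))\ge\sqrt{2\pi}$ via the inverse function theorem and $\phi'(t)=-t\phi(t)$, then deduce the quadratic lower bound from $f(0)=f'(0^+)=0$. The paper phrases it as monotonicity of $g(x)=f(x)-\sqrt{\pi/2}\,x^2$ while you integrate the bound on $f''$ twice, but this is only a cosmetic difference.
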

\begin{proof}
For $x\in [0, \frac{1}{2}]$, define $g(x) := \phi(0) - \phi\left(\Phi^{-1}\left(\frac{1}{2} + x\right)\right) -    \sqrt{\frac{\pi}{2}}x^2$ then $g(0) = 0, g(\frac{1}{2}) = +\infty$. Since $g$ is continuous on $[0, \frac{1}{2}]$, it suffices to show that $g$ is strictly increasing on $(0, \frac{1}{2})$. Using the fact that $\phi'(x) = (-x)\phi(x)$ and the inverse function theorem we get

\begin{align*}
g'(x) &= -\frac{\phi'}{\phi}\circ\Phi^{-1}\left(\frac{1}{2} + x\right) - \sqrt{2\pi} x \\
&=  \Phi^{-1}\left(\frac{1}{2} + x\right) - \sqrt{2\pi} x,\\
g''(x) &= \frac{1}{\phi(\Phi^{-1}\left(\frac{1}{2} + x\right))} - \sqrt{2\pi} \ge 0.
\end{align*}
Since the first derivative is equal to zero at zero and non-decreasing, we see that the function $g$ itself must be non-decreasing.
\end{proof}
By combining these two lemmas, we can easily show 
\begin{align*}
    a_2 &= \sqrt{2\pi}\sum\limits_{i \in I_{e^\epsilon}^1}y_i \ge \sqrt{2\pi} \sum\limits_{i = k/2 + 1}^{k/2 + m_2}y_i\\
    &= \sqrt{2\pi}\left(\phi(0) - \phi\left(\Phi^{-1}\left(\frac{1}{2} + \frac{m_2}{k}\right)\right)\right) \ge \pi \frac{m_2^2}{k^2}.
\end{align*}
A similar lower bound holds for $a_1$ as well, namely $a_1 \ge \pi\frac{m_1^2}{k^2}$. Therefore, it is sufficient to prove the inequality \eqref{eq:finalIneq}  under restrictions that $a_1 + a_2 \le 1$, \; $a_1 \ge \pi \frac{m_1^2}{k^2}$, \; $a_2 \ge \pi \frac{m_2^2}{k^2}$, and  $m_1, m_2 \le \frac{k}{2}$. Consider the derivative of $\mathcal{L}\left(a_1, a_2, \frac{m_1}{k}, \frac{m_2}{k}\right)$ with respect to $a_2$,

\begin{equation}
2t_\epsilon \left(1 + 2 t_\epsilon \frac{m_2 - m_1}{k}\right) + 2(1 - (a_1 + a_2)) \ge 0.
\end{equation}
It is non-negative, therefore $\mathcal{L}\left(a_1, a_2, \frac{m_1}{k}, \frac{m_2}{k}\right)$ is an increasing function of  $a_2$ reaching its minimum at $a_2 = \pi \frac{m_2^2}{k^2}$. For $a_1$, we have a concave quadratic function that should reach minimum at one of the boundaries, which are $a_1 = \pi \frac{m_1^2}{k^2}$ and $a_1 = 1 - a_2 = 1 - \pi \frac{m_2^2}{k^2}$. Unfortunately, the minimum could occur at either point depending on the configuration. Therefore, we should consider two cases, which we formulate as the following lemmas, denoting $\frac{m_1}{k} =x$ and $\frac{m_2}{k} = y$.

\begin{lemma}
\label{lem:a_1_right}
For $ 0 \le x, y \le \frac{1}{2}$ and $\pi x^2 + \pi y^2 \le 1$ for the parameter $ 0 < t_\epsilon \le \frac{4\pi}{1 + 8\pi} \approx 0.4808$ we have the following inequality:
\begin{equation*}
    \mathcal{L}\left(1 - \pi y^2, \pi y^2, x, y\right) \ge 0.
\end{equation*}
\end{lemma}
\begin{lemma}
\label{lem:a_1_left}
For $ 0 \le x, y \le \frac{1}{2}$ and $\pi x^2 + \pi y^2 \le 1$ for the parameter $0 < t_\epsilon \le \frac{1}{2}$ we have the following inequality:
\begin{equation*}
    \mathcal{L}\left(\pi x^2, \pi y^2, x, y\right) \ge 0.
\end{equation*}
\end{lemma}
Combining these we conclude that for $t_\epsilon \le  \frac{4\pi}{1 + 8\pi}$ the statement of Theorem \ref{thm:main} holds. But this condition is satisfied for all $\epsilon \le \log(\frac{1+12\pi}{1+4\pi}) \approx 1.04822$, which concludes the proof.

\begin{figure*}
    \centering
    \includegraphics[scale=0.36]{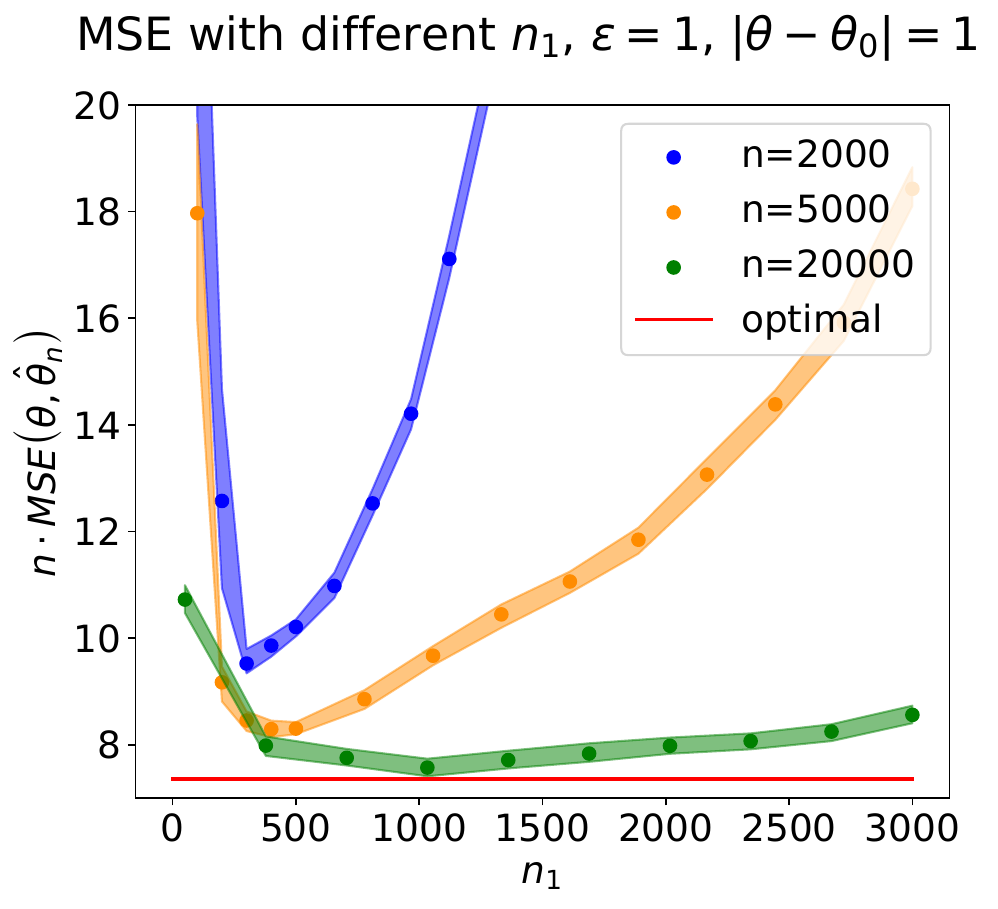}
    \includegraphics[scale=0.36]{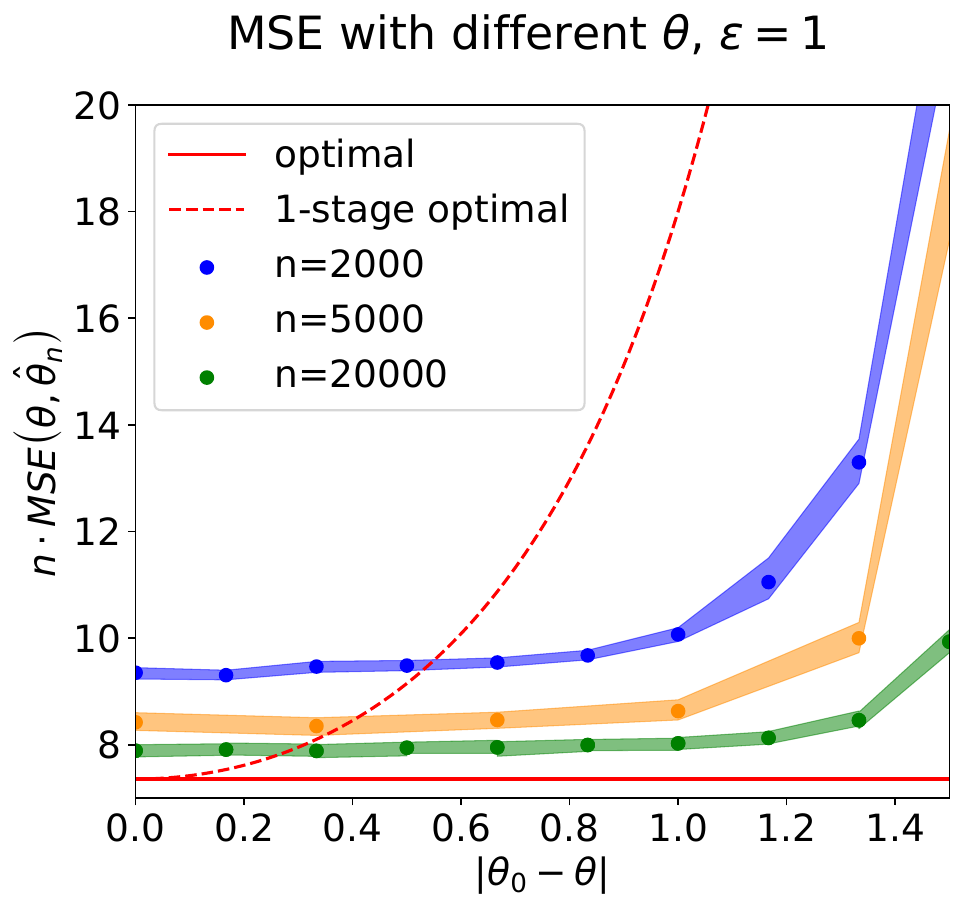}
    
    \caption{Left panel: Scaled MSE of the private estimator as a function of the number $n_1$ of first-stage samples. 
    Right panel: Scaled MSE of the private estimator as a function of the initial guess $\theta_0$.}
    \label{fig:sim}
\end{figure*}
\subsection{One-Stage Variance}

In this subsection, we address the limitations of using a single-stage procedure. The asymptotic variance of an estimator based on only one stage as in \eqref{eq:est1} with $n_1=n$ can be expressed by the following formula:

$\label{eq:one_round_variance}
\frac{1}{4}\left(\frac{e^\epsilon + 1}{e^\epsilon - 1}\right)^2 \frac{1}{\phi (\theta - \theta_0)^2}\left(1 - \left(\frac{e^\epsilon - 1}{e^\epsilon + 1}\right)^2(1 - 2\Phi(\theta_0 - \theta))^2\right).$
This expression resembles the optimal asymptotic variance derived in Theorem \ref{thm:main} when $\theta_0 \approx \theta$. A crucial observation is that for single-stage estimation, the asymptotic variance deteriorates exponentially as $|\theta_0 - \theta|$ increases. This motivates the introduction of a two-stage procedure.

\begin{proof}
The estimator from the first stage, as described in \eqref{eq:est1}, is updated according to the following formula. We assume that all $n$ samples are used for this estimate, that is, we calculate the asymptotic variance of
\begin{equation}
    \tilde{\theta}_{n} = \theta_0 - \Phi^{-1}\left(\frac{1}{2} - \frac{1}{2}\frac{e^{\epsilon} + 1}{e^{\epsilon} - 1}\bar{Z}_n\right).
\end{equation}
Here, $\bar{Z}_n = \frac{1}{n}\sum_{i=1}^{n}Z_i$, where $Z_1, \dots, Z_n$ are iid random variables taking values in $\{-1, 1\}$ according to \eqref{def:Z_i} and $X_i \thicksim N(\theta,1)$. To calculate the asymptotic variance of $\tilde{\theta}_n = f(\bar{Z}_n)$ with $f(x) := \theta_0 - \Phi^{-1}\left(\frac12 - \frac 12 \frac{e^\epsilon + 1}{e^\epsilon -1} x \right)$, we apply the delta method,
\begin{equation*}
    \sqrt{n}(f(\bar{Z}_n) - f(\E(Z_1)) \xrightarrow[n\to\infty]{D} N(0, [f'(\E(Z_1))]^2\Var(Z_1)).
\end{equation*}
Thus, we need the expectation and variance of $Z_1$, which are given by
\begin{align*}
    &\mathbb{E}(Z_1) = \frac{e^\epsilon-1}{e^\epsilon+1}(1-2\Phi(\theta_0-\theta)),\\
    &\Var(Z_1) = 1 - \left(\frac{e^\epsilon-1}{e^\epsilon+1}\right)^2(1-2\Phi(\theta_0-\theta))^2.
\end{align*}
Since the inverse function theorem gives $f'(x) = \frac12 \frac{e^\epsilon+1}{e^\epsilon-1}\left(\phi\circ\Phi^{-1}\left(\frac12 - \frac12 \frac{e^\epsilon+1}{e^\epsilon-1}x\right)\right)^{-1}$ and we have $f(\E[Z_1]) = \theta$, the asymptotic variance of $\tilde{\theta}_n$ is given by
$\frac{1}{4}\left(\frac{e^\epsilon + 1}{e^\epsilon - 1}\right)^2 \frac{1}{\phi (\theta_0 - \theta)^2}\left(1 - \left(\frac{e^\epsilon-1}{e^\epsilon+1}\right)^2(1-2\Phi(\theta_0-\theta))^2\right),
$
which concludes the proof.
\end{proof}

\subsection{General variance}

In this subsection, we describe how our efficient method for mean estimation can be adjusted to the case of a general, but known, variance. Consider original sensitive data $X_1,\dots, X_n$ that are independently and identically distributed as $P_{\theta,\sigma^2}=N(\theta, \sigma^2)$ with a known variance $\sigma^2>0$. Hence, in this subsection, the statistical model is given by $\P_{\sigma^2} = \{N(\theta,\sigma^2):\theta\in\R\}$. The only conceptual challenge is to determine a lower bound on the asymptotic variance of any locally private method. Let 
\begin{equation*}
I_{\theta}(Q\P_{\sigma^2}):=\E_{Z \sim QP_{\theta,\sigma^2}}\left[\left(\frac{\partial \log QP_{\theta,\sigma^2}(Z)}{\partial \theta}\right)^2\right]
\end{equation*}
denote the information about $\theta$ contained in the $Q$-sanitized data when $\sigma^2$ is fixed. In particular, with our previous notation we have $\P_1 = \P$. The following result extends Theorem~\ref{thm:main} to the case of general $\sigma^2>0$.

\begin{theorem}\label{thm:UnknownSigma}
    For $\theta\in\R$, $\sigma^2>0$, $\epsilon>0$ and $Q\in\mathcal Q_\epsilon$ we have
    $$
    I_{\theta}(Q\P_{\sigma^2}) = \frac{1}{\sigma^2}I_{\frac{\theta}{\sigma}}(Q_\sigma\P),
    $$
    for some mechanism $Q_\sigma\in\mathcal Q_\epsilon$ that depends only on $\sigma$. In particular, by Theorem~\ref{thm:main}, if $\epsilon\le 1.04$, it holds that
    \begin{equation}\label{eq:OptVarSigma}
    I_{\theta}(Q\P_\sigma) 
    \le 
    \frac{1}{\sigma^2} I_{\frac{\theta}{\sigma}}\left(Q_{\frac{\theta}{\sigma},\epsilon}^{\text{sgn}}\P\right) 
    =
    \frac{1}{\sigma^2}\frac{2}{\pi} \frac{(e^\epsilon - 1)^2}{(e^\epsilon + 1)^2},
    \end{equation}
    for all $Q\in\mathcal Q_\epsilon$.
\end{theorem}

In accordance with standard statistical intuition, Theorem~\ref{thm:UnknownSigma} now shows that for general known $\sigma^2>0$ we should rescale the original data $X_i/\sigma \thicksim N(\frac{\theta}{\sigma},1)$, estimate the ratio $\frac{\theta}{\sigma}$ using the two-stage procedure in \eqref{eq:est1} and \eqref{eq:est2} based on the sign-mechanism and then scale back by $\sigma$ to achieve the optimal variance given by the inverse of the upper bound in \eqref{eq:OptVarSigma}. It is then clear that this final rescaling by the true $\sigma$ amounts exactly to the additional factor $\sigma^2$ in the asymptotic variance (cf. \eqref{eq:OptVarSigma}).

If the variance $\sigma^2$ is actually unknown and has to be estimated from appropriately sanitized data, things become much more involved. Estimating the vector $(\theta, \sigma^2)$ efficiently appears to be an ill-posed problem under local differential privacy in the sense that there is no maximizing element $Q$ in the partial Loewner ordering of semidefinite matrices which maximizes the $2\times2$ Fisher-Information matrix $I_{\theta,\sigma^2}(Q\P^{(2)})$, $\P^{(2)}:= \{N(\theta,\sigma^2):\theta\in\R, \sigma^2>0\}$. Intuitively, this can be understood by noticing that in order to estimate both $\theta$ and $\sigma^2$ we would have to extract two different features of each sensitive $X_i$ in a locally private way and we have to make a decision on how to distribute the privacy budget $\epsilon$ over these two tasks, while for estimation of a single parameter, we can use the full privacy budget on that one. Even if $\sigma^2$ is only considered to be a nuisance parameter and the goal is still an efficient estimation of $\theta$, it is currently not clear what the appropriate lower bound for the asymptotic variance and an efficient procedure would look like. We defer these important and fundamental issues to future research.

We close this discussion by mentioning that for general unknown $\sigma^2>0$ our first stage estimator \eqref{eq:est1} is not even consistent for $\theta$. However, consistent locally private, yet inefficient, estimators for both $\theta$ and $\sigma^2$ can be constructed using the method of moments and clipping as explained, for instance, in Theorem~4.3 of \citet{Steinberger24}.

\section{Experiments}
\label{sec:sim}

In this section, we provide numerical experiments to investigate the finite sample performance of our two-stage locally private estimation procedure described in \eqref{eq:est1} and \eqref{eq:est2} in dependence on the tuning parameters $\theta_0\in\R$ and $n_1\in\N$, for different sample sizes $n$ and $\epsilon=1$. For the plots, we generated 50000 Montecarlo samples and computed the 95\% confidence intervals via bootstrapping. In the left panel of Figure~\ref{fig:sim}, we plot the scaled mean squared error (MSE) of $\hat{\theta}_n$ as a function of the number $n_1$ of samples in the first stage of our private estimation procedure. From the plots, we see that a proper choice of $n_1$ is crucial to get a small MSE. Especially too small values of $n_1$ must be avoided.

In the right panel of Figure~\ref{fig:sim}, we plot the scaled MSE as a function of the difference $\theta_0-\theta$, where $n_1$ was chosen such that the corresponding MSE function in the left panel is minimized. The results are not surprising. The MSE increases as the initial guess $\theta_0$ moves away from the true value $\theta$. Notice that our results do not provide uniform convergence in $\theta$, and in fact it is known that with differential privacy it is impossible to achieve uniformity over an unbounded parameter space. 
\begin{figure}[t]
    \centering    \includegraphics[scale=0.5]{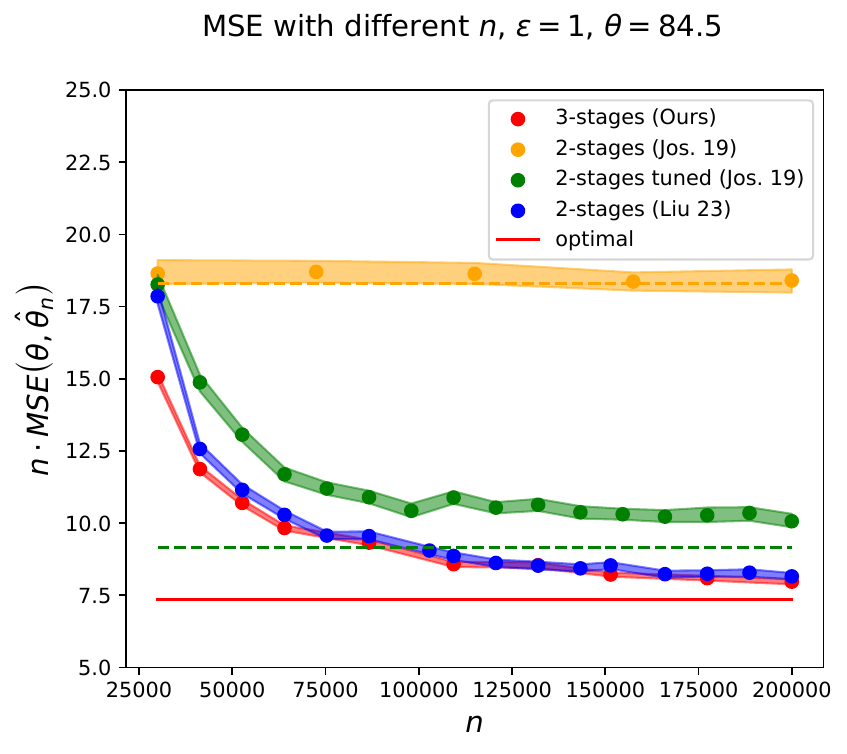}
    \caption{
    The plot compares the performance of our enhanced 3-stage mechanism with the original mechanism from \citet{Jos19}, including a tuned version in which we adjusted the number of samples allocated to the first stage. Additionally, it is compared with \citet{liu2023online}, which has been enhanced by a preliminary stage from \citet{Jos19}. The horizontal lines represent the theoretical variance limits for each mechanism.}
    \label{fig:3_steps}
\end{figure}
\subsection{3-Stage Mechanism}
Estimating the true mean $\theta$ for large initial differences $|\theta - \theta_0|$ is challenging for our two-stage mechanism. To enhance performance, we incorporate the first stage of the mechanism from \citet{Jos19}, which estimates $\theta$ bitwise with high probability, provided an upper bound on the number of bits to estimate. In their approach, the same sign mechanism is used for the second stage. However, our theory predicts that splitting the data equally between these stages is suboptimal. Even when tuned with a better choice of steps allocated to the first stage, their mechanism remains suboptimal because the first stage fails to consistently estimate $\theta$.  

We provide additional results of a 3-stage mechanism in Figure \ref{fig:3_steps}, with $\theta = 84.5$, under the assumption that the unknown $\theta$ is in the range $[0, 128]$. With varying sample sizes $n$, we observe that the MSE converges to the optimal value for our 3-stage procedure. Specifically, we allocate $n_0 = 15,000$ samples to the preliminary stage based on \citet{Jos19}, which we find to be close to optimal for estimating the first 7 bits of $\theta$. We also provide the theoretical limits for the 2-stage mechanisms based on the formula \eqref{eq:one_round_variance} We optimized over the choice of $n_1$ for the 3-stage mechanism and found that for values of $n$ ranging from $30,000$ to $200,000$, the optimal $n_1$ initially grows from $400$ to $700$ before almost saturating around $700$. We conjecture that as $n$ continues to grow, $n_1$ might increase logarithmically; however, we lack the theoretical understanding to confirm this. We also compare our approach with the method of \citet{liu2023online}, enhanced by the preliminary stage of \citet{Jos19}. Our results show that for large $n$, their method performs almost identically to ours, while for smaller $n$, it may perform slightly worse. The method of \citet{liu2023online} has been proven to achieve a variance that we have shown to be asymptotically optimal. We illustrate its performance in Figure \ref{fig:3_steps} as well.

Our results show that the 3-stage mechanism allows us to estimate $\theta$ asymptotically optimally across a broader range of $\theta$ for a finite $n$. 

\section{Discussion}

This study addressed the problem of estimating the unknown mean $\theta$ of a unit variance Gaussian distribution under local differential privacy (LDP). We proved that the sign mechanism, which computes the sign of the difference $X_i - \theta$ and applies randomized response, has the lowest asymptotic variance among all locally differentially private mechanisms in the high-privacy regime ($\epsilon \leq 1.04$). However, this mechanism is purely theoretical since it requires knowledge of $\theta$, which we are trying to estimate. To address this issue, we propose a two-stage mechanism that allows for the estimation of $\theta$ without any assumptions. For practical considerations, we enhanced our mechanism with an additional pre-step from \citet{Jos19} to provide a better initial guess for $\theta$, enabling a good private estimate over a wide range of $\theta$ with a finite sample size $n$.

However, extending our theoretical approach to more complex scenarios presents challenges. For $\epsilon \geq 2$, the binary mechanism becomes provably suboptimal, as evidenced by the Fisher Information, which converges to $\frac{2}{\pi}$ as $\epsilon \to\infty$, rather than to $1$, which is the Fisher Information in classical (non-private) estimation. Identifying exact bounds or closed form expressions of optimal mechanisms for larger $\epsilon$ remains an open question. Additionally, extending our method to the multidimensional case is difficult, as optimizing the inverse Fisher Information matrix is ill-posed. The current theoretical framework also does not apply to minimizing the sum of variances in this context. Future research will need to develop new approaches to address these challenges and fully characterize optimal LDP mechanisms in more complex settings.

\subsubsection*{Acknowledgements}

We would like to express our gratitude to Christoph Lampert for his valuable insights and fruitful discussions that significantly contributed to the development of this paper.

We also thank Salil Vadhan for his constructive feedback on an earlier version of this draft.

The second author gratefully acknowledges support by the Austrian Science Fund (FWF): I~5484-N, as part of the Research Unit 5381 of the German Research Foundation.

\bibliography{lit}


%
%





%

%

\appendix

\onecolumn

\section{Appendix}

\subsection{Proof of Theorem~\ref{thm:efficiency}}
\label{sec:prof-efficiency}

Rather than relying on Theorem~4.12 in \citet{Steinberger24} and checking all their assumptions, we here present a direct proof that also has the advantage of being self-contained. We begin by showing the consistency of first and second-stage estimators. For convenience, fix $h\in\R$ and set $\theta_n := \theta + h/\sqrt{n}$. All probabilities, expectations, variances and stochastic convergence results below are with respect to $R_{\theta_n}$.

\subsubsection{Consistency}

Showing consistency of the first-stage estimator \eqref{eq:est1} is rather straight forward, because $Z_1,\dots, Z_{n_1}$ are iid with values in $\{-1,1\}$ and 
\begin{align*}
\E[Z_1] &= P(Z_1=1) - P(Z_1=-1)
= 2P(Z_1=1)-1\\
&=2\frac{e^\epsilon}{1 + e^\epsilon} - 2\frac{e^\epsilon - 1}{e^\epsilon + 1}\Phi(\theta_0 - \theta_n) - 1
= \frac{e^\epsilon-1}{e^\epsilon+1}(1-2\Phi(\theta_0-\theta_n)).
\end{align*}

Thus, by Markov's inequality, $\bar{Z}_{n_1}$ converges in probability to $\frac{e^\epsilon-1}{e^\epsilon+1}(1-2\Phi(\theta_0-\theta))\in(-\frac{e^\epsilon-1}{e^\epsilon+1},\frac{e^\epsilon-1}{e^\epsilon+1} )$, and thus $P(|\bar{Z}_{n_1}|<\frac{e^\epsilon-1}{e^\epsilon+1}) \to 1$ as $n\to\infty$ and $\tilde{\theta}_{n_1}$ converges to $\theta$ in probability, by the continuous mapping theorem.

Consistency of \eqref{eq:est2} can be shown in a similar way, using the conditional Markov inequality. In view of consistency of $\tilde{\theta}_{n_1}$ it suffices to show that $\bar{Z}_{n_2}=\frac{1}{n_2}\sum_{i=n_1+1}^nZ_i\to0$ in probability as $n\to\infty$. Conditionally on $\tilde{\theta}_{n_1}$, the $Z_{n_1+1}, \dots, Z_n$ are iid. The conditional expectation is computed in the same way as above, that is 
$$
\E[\bar{Z}_{n_2}|\tilde{\theta}_{n_1}] = \E[Z_n|\tilde{\theta}_{n_1}] = \frac{e^\epsilon-1}{e^\epsilon+1}(1-2\Phi(\tilde{\theta}_{n_1}-\theta_n)),
$$
and it converges to zero in probability as $n\to\infty$. The $Z_i$'s only take values $1$ or $-1$ and thus their conditional variance is bounded by $1$. Now
\begin{align*}
P(|\bar{Z}_{n_2}|>\varepsilon) &\le P(|\bar{Z}_{n_2}-\E[\bar{Z}_{n_2}|\tilde{\theta}_{n_1}]|+|\E[\bar{Z}_{n_2}|\tilde{\theta}_{n_1}]|>\varepsilon) \\
&\le\E\left[P\left(|\bar{Z}_{n_2}-\E[\bar{Z}_{n_2}|\tilde{\theta}_{n_1}]|>\frac{\varepsilon}{2}\Big|\tilde{\theta}_{n_1}\right) \right] +P\left(|\E[\bar{Z}_{n_2}|\tilde{\theta}_{n_1}]|>\frac{\varepsilon}{2}\right) \\
&\le
\frac{4}{\varepsilon^2}\E\left[1\land \Var[\bar{Z}_{n_2}|\tilde{\theta}_{n_1}] \right] + P\left(|\E[Z_n|\tilde{\theta}_{n_1}]|>\frac{\varepsilon}{2}\right)\\
&\le \frac{4}{\varepsilon^2n_2} + P\left(|\E[Z_n|\tilde{\theta}_{n_1}]|>\frac{\varepsilon}{2}\right) \xrightarrow[n\to\infty]{} 0.
\end{align*}

 \subsubsection{Asymptotic normality}
 
For asymptotic normality, first notice that we already showed above that $|\bar{Z}_{n_1}|$ and $|\bar{Z}_{n_2}|$ are bounded by $\frac{e^\epsilon-1}{e^\epsilon+1}$ with asymptotic probability one. Hence, it suffices to proceed on the event where both of these absolute values are appropriately bounded. Write $G(p) := \Phi^{-1}(p)$ and consider the scaled estimation error of \eqref{eq:est2}, that is
\begin{align*}
 \sqrt{n}(\hat{\theta}_n - \theta_n) &= -\sqrt{n}\left( \Phi^{-1}\left(\frac12 - \frac12\frac{e^\epsilon + 1}{e^\epsilon - 1}\bar{Z}_{n_2}\right) - (\tilde{\theta}_{n_1}-\theta_n)\right)\\
 &=-\sqrt{n} \left( G\left(\frac12 - \frac12\frac{e^\epsilon + 1}{e^\epsilon - 1}\bar{Z}_{n_2}\right)  - G(\Phi(\tilde{\theta}_{n_1}-\theta_n))\right).
\end{align*}
By a first order Taylor expansion and writing $A_n:=\frac12 - \frac12\frac{e^\epsilon + 1}{e^\epsilon - 1}\bar{Z}_{n_2} = \frac{1}{n_2}\sum_{i=n_1+1}^{n}\left(\frac12 - \frac12\frac{e^\epsilon + 1}{e^\epsilon - 1}Z_i\right)$ and $B_n:= \Phi(\tilde{\theta}_{n_1}-\theta_n)$, we get
\begin{align}\label{eq:Taylor}
\sqrt{n}(G(A_n) - G(B_n))=  G'(B_n)\sqrt{n}(A_n-B_n) + \frac12 \sqrt{n}\int_{B_n}^{A_n} G''(t)(A_n-t)^2dt.
\end{align}
From our results of the previous subsection we see that $A_n\to\frac12$ and $B_n\to\frac12$ in probability, and since $G'$ is continuous on $[0,1]$ we have $G'(B_n) \to G'(\frac12) = [\phi\circ\Phi^{-1}(\frac12)]^{-1} = \sqrt{2\pi}$, in probability. Next, we show that $\sqrt{n}(A_n-B_n)\to N(0,\frac14(\frac{e^\epsilon+1}{e^\epsilon-1})^2)$ in distribution. For $i=n_1+1,\dots, n$, define $Y_i := \frac12 - \frac12\frac{e^\epsilon + 1}{e^\epsilon - 1}Z_i - \Phi(\tilde{\theta}_{n_1}-\theta_n)$ and note that conditional on $\tilde{\theta}_{n_1}$ they are iid with conditional mean zero and conditional variance
$$
\Var[Y_i|\tilde{\theta}_{n_1}] = \frac14\left(\frac{e^\epsilon + 1}{e^\epsilon - 1} \right)^2 \left[1- \left(\frac{e^\epsilon-1}{e^\epsilon+1}(1-2\Phi(\tilde{\theta}_{n_1}-\theta_n))\right)^2\right] \xrightarrow[n\to\infty]{i.p.} \frac14\left(\frac{e^\epsilon + 1}{e^\epsilon - 1} \right)^2.
$$
Moreover, $\Var[Y_i|\tilde{\theta}_{n_1}] \ge \frac14\left(\frac{e^\epsilon + 1}{e^\epsilon - 1} \right)^2 [1- (\frac{e^\epsilon-1}{e^\epsilon+1})^2]$ and $\E[|Y_i|^3|\tilde{\theta}_{n_1}] \le \frac32 + \frac12\frac{e^\epsilon + 1}{e^\epsilon - 1}$.
Thus, from the Berry-Esseen bound \citep[cf.][Theorem~1]{Berry41} applied to the conditional distribution, we get for every $t\in\R$,
\begin{align*}
&\left|P\left( \sqrt{n_2}\frac{A_n-B_n}{\sqrt{\Var(Y_n|\tilde{\theta}_{n_1})}}\le t\right) - \Phi(t)\right| =
\left|P\left( \sqrt{n_2}\sum_{i=n_1+1}^n\frac{Y_i}{\sqrt{\Var(Y_i|\tilde{\theta}_{n_1})}}\le t\right) - \Phi(t)\right| \\
&\quad\le 
\E\left|P\left( \sqrt{n_2}\sum_{i=n_1+1}^n\frac{Y_i}{\sqrt{\Var(Y_i|\tilde{\theta}_{n_1})}}\le t\Bigg|\tilde{\theta}_{n_1} \right) - \Phi(t)\right| \le\frac{1.88 (\frac32 + \frac12\frac{e^\epsilon + 1}{e^\epsilon - 1})}{\frac18\left(\frac{e^\epsilon + 1}{e^\epsilon - 1} \right)^3 [1- (\frac{e^\epsilon-1}{e^\epsilon+1})^2]^{\frac32}\sqrt{n_2}}.
\end{align*}
Applying Slutski's theorem and noting that $n/n_2\to1$, we get the desired convergence of $\sqrt{n}(A_n-B_n)\to N(0,\frac14(\frac{e^\epsilon+1}{e^\epsilon-1})^2)$ in distribution. Hence, $G'(B_n)\sqrt{n}(A_n-B_n)\to N(0,\frac{\pi}{2}(\frac{e^\epsilon+1}{e^\epsilon-1})^2)$. From Theorem~\ref{thm:main} we see that this is the optimal asymptotic variance. Thus, the proof is finished if we can show that the remainder term in \eqref{eq:Taylor} converges to zero in probability. But this is easily seen, because on the event where $A_n,B_n\in[\frac14,\frac34]$, which has asymptotic probability one, the remainder term is bounded in absolute value by
$$
\frac12|\sqrt{n}(A_n-B_n)| |A_n-B_n|^2 \sup_{t\in[\frac14,\frac34]} |G''(t)| \xrightarrow[n\to\infty]{i.p.}0.
$$

\subsection{Proof of Lemma \ref{lem:a_1_right}}

We need to show that the inequality in Lemma~\ref{lem:a_1_right} holds, i.e., that
\begin{equation}
\label{def:L_lem_1}
L(x, y, t_\epsilon):=  \mathcal{L}\left(1 - \pi y^2, \pi y^2, x, y\right)  = 2 t_\epsilon (2\pi y^2 - 1) + 4 t_\epsilon^2 (y - x) (2 \pi y^2 - 1) - 4 t_\epsilon^2 (y - x)^2 + 1 \ge 0.
\end{equation}
This is a quadratic function in $x$ and we can express it as

\begin{equation*}    
L(x, y, t_\epsilon) = -4t_\epsilon^2 x^2 -4xt_\epsilon^2(2\pi y^2 -1-2y) + 2 t_\epsilon (2\pi y^2 - 1)+ 4t_\epsilon^2 (2\pi y^2 - 1)y - 4t_\epsilon^2 y^2 + 1.
\end{equation*}

The minimal value is achieved on the tails, for $x = 0$ or $x = \frac{1}{2}$. First, we consider the case when $x = 0$.

\subsubsection{Case $x = 0$}

\begin{equation}\label{eq:casex0}
L(0, y, t_\epsilon) = 2 t_\epsilon (2\pi y^2 - 1)+ 4t_\epsilon^2 (2\pi y^2 - 1)y - 4t_\epsilon^2 y^2 + 1.
\end{equation}

We will prove that for $t_\epsilon \le \frac{4\pi}{1 + 8\pi} < \frac{1}{2}$ the expression in \eqref{eq:casex0} is greater than $0$.
This expression is a polynomial of the third power in terms of 
$y$, and we minimize it on the segment  $0 \le y \le \frac{1}{2}$. For $y = 0$ we have

\begin{equation}
L(0, 0, t_\epsilon) = 1 - 2t_\epsilon \ge 0,
\end{equation}
which is true for $t_\epsilon \le \frac{1}{2}$. For $y = \frac{1}{2}$, we have:

\begin{equation}
L(0, 1/2, t_\epsilon) = \pi t_\epsilon - 2t_\epsilon + 2 t_\epsilon^2 \left(\frac{\pi}{2} - 1\right) + 1 - t_\epsilon^2 \ge 0,
\end{equation}
which is true for $t_\epsilon \le 1$. Next, we consider the potential optimum in between where the derivative is equal to $0$:

\begin{align}
\frac{\partial L(0, y, t_\epsilon)}{\partial y} &= 8 t_\epsilon \pi y + 24 t_\epsilon^2 \pi y^2 - 4 t_\epsilon^2 - 8 t_\epsilon^2 y\\
&=4t_\epsilon \left(2 \pi y + 6 t_\epsilon \pi y^2 - t_\epsilon - 2 t_\epsilon y\right)\\ 
&=4t_\epsilon \left(6 t_\epsilon \pi y^2 + y (2\pi - 2 t_\epsilon) - t_\epsilon\right) = 0. 
\end{align}
By solving the quadratic equation we obtain an optimal value
\begin{equation}\label{eq:root}
y^{*}(t_\epsilon) = \frac{t_\epsilon - \pi + \sqrt{6\pi t_\epsilon^2 + (\pi - t_\epsilon)^2}}{6\pi t_\epsilon}.
\end{equation}
The other root is negative whereas the root in \eqref{eq:root} is between $0$ and $\frac12$ since
\begin{equation}
    \pi-t_\epsilon\le \sqrt{6\pi t_\epsilon^2 + (\pi - t_\epsilon)^2} \le 3\pi t_\epsilon + (\pi - t_\epsilon).
\end{equation}

Next, we will prove that the function $L(0, y^*(t_\epsilon), t_\epsilon)$ is a decreasing function of $t_\epsilon$. To that end, consider first the derivative of $y^*$, 
\begin{equation}
    \frac{\partial y^*(t_\epsilon)}{\partial t_\epsilon} = \frac{1}{6 t_\epsilon^2}\left[1  - \frac{(\pi t_\epsilon^{-1} - 1)}{ \sqrt{6\pi + (\pi t_\epsilon^{-1} - 1)^2}}\right] > 0,
\end{equation}
and 
\begin{align*}
    \frac{\partial L(0, y^*, t_\epsilon)}{\partial t_\epsilon} &= 2 (2\pi (y^*)^{2} - 1)  + 8 t_\epsilon\pi y^*  \frac{\partial y^*}{\partial t_\epsilon}+ 8t_\epsilon (2\pi (y^*)^2 - 1)y^* + 24t_\epsilon^2 \pi (y^*)^2 \frac{\partial y^*}{\partial t_\epsilon} \\
    &\quad+ 4t_\epsilon^2 (2\pi (y^*)^2 - 1)\frac{\partial y^*}{\partial t_\epsilon} - 8t_\epsilon (y^*)^2 - 8t_\epsilon^2 y^* \frac{\partial y^*}{\partial t_\epsilon}\\
    &= (2\pi (y^*)^2 - 1)(2 + 8t_\epsilon y) - 8 t_\epsilon (y^*)^2 +  \frac{\partial y^*}{\partial t_\epsilon} \left[8\pi t_\epsilon y^* + 24 \pi t_\epsilon^2 (y^*)^2 - 4t_\epsilon^2 - 8t_\epsilon^2 y^*\right]\\
    &= (2\pi (y^*)^2 - 1)(2 + 8t_\epsilon y^*) - 8 t_\epsilon (y^*)^2 +   4t_\epsilon\left[2\pi y^* + 6 \pi t_\epsilon (y^*)^2 - t_\epsilon - 2t_\epsilon y^*\right]\frac{\partial y^*}{\partial t_\epsilon}
\end{align*}
Now we can substitute $y^*(t_\epsilon)$, namely $2\pi (y^*)^2 = \frac{1}{3} - \frac{2y^*}{3t_\epsilon}(\pi - t_\epsilon)$ and $2\pi y^* + 6 \pi t_\epsilon (y^*)^2 - t_\epsilon - 2t_\epsilon y^* = 0$ to obtain

\begin{equation}
    \frac{\partial L(0, y^*(t_\epsilon), t_\epsilon)}{\partial t_\epsilon} = \left(-\frac{2}{3} - \frac{2y^*}{3t_\epsilon}(\pi - t_\epsilon)\right)(2 + 8t_\epsilon y^*) - 8 t_\epsilon (y^*)^2 < 0
\end{equation}
Thus the function $t_\epsilon\mapsto L(0,y^*(t_\epsilon),t_\epsilon)$ is continuous and strictly decreasing. Therefore it suffices to identify the point where it reaches zero. We will argue that it is when $t_\epsilon = \frac{4\pi}{1 + 8\pi}$.  For $t_\epsilon = \frac{4\pi}{1 + 8\pi}$, we have 
\begin{align}
 y^* = \frac{\frac{4\pi}{1 + 8\pi} - \pi + \sqrt{6\pi \left(\frac{4\pi}{1 + 8\pi}\right)^2 + \left(\pi - \frac{4\pi}{1 + 8\pi}\right)^2}}{6\pi \frac{4\pi}{1 + 8\pi}} = \frac{3\pi - 8\pi^2 + \sqrt{96\pi^3 + (8\pi^2 - 3\pi)^2}}{24\pi^2} = \frac{1}{4\pi}.   
\end{align}
By substituting it into the $L(0, y^*(t_\epsilon), t_\epsilon)$ we get
\begin{align}
    L(0, y^*(t_\epsilon), t_\epsilon)\Big|_{t_\epsilon = \frac{4\pi}{1 + 8\pi}} &= \left[2t_\epsilon (2\pi (y^*)^2 - 1)+ 4t_\epsilon^2 (2\pi (y^*)^2 - 1)y^* - 4t_\epsilon^2 (y^*)^2 + 1\right]_{t_\epsilon = \frac{4\pi}{1 + 8\pi}}\\
    &= \frac{8\pi}{1 + 8\pi} \left(\frac{1}{8\pi} - 1\right) + \frac{16\pi}{(1 + 8\pi)^2}\left(\frac{1}{8\pi} - 1\right) - \frac{4}{(1 + 8\pi)^2} + 1\\
    &= \frac{1 - 64\pi^2}{(1 + 8\pi)^2} + \frac{2 - 16\pi}{(1 + 8\pi)^2} - \frac{4}{(1 + 8\pi)^2} + 1\\
    &= \frac{1 - 64\pi^2 + 2 - 16\pi - 4 + 1 + 16\pi + 64\pi^2}{(1 + 8\pi)^2} = 0.
\end{align}
Therefore, the original inequality holds when $t_\epsilon \le \frac{4\pi}{1 + 8\pi}$, which concludes the proof of the case $x = 0$.

\subsubsection{Case $x = \frac{1}{2}$, $0 \le y \le \sqrt{\frac{1}{\pi} - \frac{1}{4}}$}

We need to show the following inequality for the function $L(x, y, t_\epsilon)$ defined in \eqref{def:L_lem_1}:
\begin{align}
    L(1/2, y, t_\epsilon) =2 t_\epsilon (2\pi y^2 - 1) + 4 t_\epsilon^2 (y - 1/2) (2 \pi y^2 - 1)  - 4 t_\epsilon^2 (y - 1/2)^2 + 1  \ge 0 .
\end{align}
First, we note that for $y = 0$,
\begin{equation}
L(1/2, 0, t_\epsilon) = -2t_\epsilon + 2t_\epsilon^2 +1 -t_\epsilon^2 = (t_\epsilon - 1)^2\ge 0.
\end{equation}
Next, for $y =  \sqrt{\frac{1}{\pi} - \frac{1}{4}}$, we have
\begin{align}
    L\left(\frac{1}{2}, \sqrt{\frac{1}{\pi} - \frac{1}{4}}, t_\epsilon\right) &= -t_\epsilon (\pi - 2) + 2 t_\epsilon^2 \left(\frac{1}{2} - \sqrt{\frac{1}{\pi} - \frac{1}{4}}\right) (\pi - 2) + 1 - 4 t_\epsilon^2 \left(\sqrt{\frac{1}{\pi} - \frac{1}{4}} - \frac{1}{2}\right)^2. 
\end{align}
To simplify the expression, let us denote $\alpha = \sqrt{\frac{1}{\pi} - \frac{1}{4}}$. Then the desired inequality is reduced to
\begin{align}\label{eq:ineqLalpha}
L\left(\frac{1}{2}, \alpha, t_\epsilon\right) =  2 t_\epsilon^2 \left(\frac{1}{2} - \alpha\right) \left(\pi - 3 + 2\alpha\right) -t_\epsilon (\pi - 2) + 1 \ge 0.
\end{align}
The minimum of this quadratic function is achieved when $t_\epsilon = \frac{\pi - 2}{2(1 - 2\alpha)(\pi - 3 + 2\alpha)} \approx 1.8$, therefore it is a decreasing function of $t_\epsilon$ on the range $t_\epsilon \le 1$ and $L(1/2, \alpha, 1) \approx 0.175 > 0$, therefore the inequality \eqref{eq:ineqLalpha} is fulfilled for $t_\epsilon \le 1$.
Now we need to compute the derivative of the function with respect to $y$ to find a possible minimal value on the segment $0 \le y \le \sqrt{\frac{1}{\pi} - \frac{1}{4}}$.

\begin{align}
   \frac{\partial L(1/2, y, t_\epsilon)}{\partial y} &= 8 t_\epsilon \pi y + 4 t_\epsilon^2 (2 \pi y^2 - 1) + 16 \pi t_\epsilon^2 (y - 1/2) y  - 8 t_\epsilon^2 (y - 1/2)\\
   &= 4t_\epsilon\left(2\pi y + t_\epsilon 2 \pi y^2 - t_\epsilon + 4\pi t_\epsilon y^2 - 2\pi t_\epsilon y - 2t_\epsilon y + t_\epsilon\right) \\
   &= 8t_\epsilon \left( 3\pi t_\epsilon y^2 + (\pi - \pi t_\epsilon - t_\epsilon)y\right) = 0
\end{align}
The case when $y = 0$ we have already considered, the second root is 
\begin{equation}
y = \frac{t_\epsilon (\pi + 1) - \pi}{3\pi t_\epsilon}.
\end{equation} For $ 0 < t_\epsilon \le \frac{1}{2}$ this fraction is always negative. Therefore, the minimum is achieved when $y = 0$ or $y = \sqrt{\frac{1}{\pi} - \frac{1}{4}}$, concluding the proof of the lemma.

\subsection{Proof of Lemma \ref{lem:a_1_left}}

We need to show that
\begin{align*}
L(x, y, t_\epsilon):=  \mathcal{L}\left(\pi x^2, \pi y^2, x, y\right) = 2 t_\epsilon \pi (y^2 - x^2) &+ 4 t_\epsilon^2 \pi (y - x) (y^2 - x^2) - \pi^2 (x^2 + y^2)^2\\
&\quad+ 2\pi (x^2 + y^2) - 4 t_\epsilon^2 (y - x)^2 \ge 0.
\end{align*}
We consider the function $L$ of two variables $(x, y)$ and parameter $t_\epsilon$. We minimize this function to prove that the minimal value is achieved when $x = y = 0$ and therefore the inequality holds. We start by proving that for $t_\epsilon \le \frac{1}{2}$ the optimal value has to be on the boundary, that is, we show that the optimum can not be achieved in the interior by showing that the partial derivatives cannot simultaneously be equal to $0$. Consider the system of derivatives
\begin{equation*}
    \begin{cases}
        \frac{\partial L(x, y, t_\epsilon)}{\partial y} =  4 t_\epsilon \pi y + 8 t_\epsilon^2 \pi (y^2 - x^2) + 4 t_\epsilon^2\pi (y - x)^2 - 4 \pi^2 (x^2 + y^2)y + 4\pi y - 8 t_\epsilon^2 (y - x) = 0,\\
        \frac{\partial L(x, y, t_\epsilon)}{\partial x} = -4 t_\epsilon \pi x - 8 t_\epsilon^2 \pi (y^2 - x^2) + 4 t_\epsilon^2\pi (y - x)^2 -  4 \pi^2 (x^2 + y^2)x + 4 \pi x +  8t_\epsilon^2 (y - x) = 0.
    \end{cases}
\end{equation*}
We transform this system into the equivalent system of the sum and difference of those equalities, namely,
\begin{equation}
    \begin{cases}
        4 t_\epsilon \pi (y - x) + 8 t_\epsilon^2 \pi (y - x)^2 - 4 \pi^2 (x^2 + y^2)(y + x) + 4\pi (x + y)  = 0,\\
        4 t_\epsilon \pi (y + x) + 16 t_\epsilon^2 \pi (y^2 - x^2) - 4 \pi^2 (x^2 + y^2)(y - x) + 4\pi (y - x) - 16 t_\epsilon^2 (y -x)  = 0.
    \end{cases}
\end{equation}
For simplicity, we divide everything by $4\pi$ and collect terms to get

\begin{equation}
    \begin{cases}
         t_\epsilon (y - x)(  1 + 2 t_\epsilon(y - x)) +(y + x) (1 -   \pi (x^2 + y^2)) = 0,\\
        t_\epsilon (y + x)(  1 + 4 t_\epsilon(y - x)) +(y - x) (1 -   \pi (x^2 + y^2)) - \frac{4}{\pi}t_\epsilon^2 (y - x) = 0.
    \end{cases}
\end{equation}
From the first equation, we conclude that $y - x < 0$, or otherwise the sum cannot be zero as $(y+x)(1 - \pi(x^2 + y^2)) > 0$ in the interior of the domain. From the first equation, we can express $1 -\pi(x^2 + y^2)$ as
\begin{equation}
    1 - \pi(x^2 + y^2) = \frac{t_\epsilon (x - y) (1 + 2 t_\epsilon (y - x))}{x + y},
\end{equation}
substitute it in the second equation and multiply by $x + y > 0$ to get
\begin{equation}
    t_\epsilon (y + x)^2(  1 + 4 t_\epsilon(y - x)) - t_\epsilon(x -y)^2 (  1 + 2 t_\epsilon(y - x)) + \frac{4}{\pi}t_\epsilon^2 (x^2 - y^2) = 0.
\end{equation}
Now we divide everything by $t_\epsilon>0$ and recombine the terms to get
\begin{equation}\label{eq:nonzero}
   \mathrm{G}(x, y, t_\epsilon) :=  t_\epsilon (x - y) \left(2 (x - y)^2 - 4(x + y)^2 + \frac{4}{\pi}(x + y)\right)  + (x + y)^2 - (x - y)^2= 0.
\end{equation}
We will show that for $t_\epsilon \le \frac{1}{2}$ this identity can not hold. In particular, we show that the left side is larger than $0$. To that end, let us consider the change of variables $a = x - y > 0$, $b = x + y \ge a$ and

\begin{equation}
\mathrm{G}(x(a, b), y(a, b), t_\epsilon) = t_\epsilon a\left(2 a^2 - 4b^2 + \frac{4}{\pi}b\right) + b^2 - a^2
\end{equation}

Consider the derivative with respect to $b$,
\begin{equation}
\frac{\partial \mathrm{G}(x(a, b), y(a, b), t_\epsilon)}{\partial b} = 2b + t_\epsilon a \left(\frac{4}{\pi} -8 b\right) = 2b(1 - 4t_\epsilon a) + \frac{4}{\pi}t_\epsilon a > 0
\end{equation}
for $t_\epsilon \le \frac{1}{2}$ and because $a \le \frac{1}{2}$ by assumption.
Therefore $\mathrm{G}(x(a, b), y(a, b), t_\epsilon)$ is minimized when $b$ is minimized, but $b \ge a$, thus
\begin{equation}
\mathrm{G}(x(a, b), y(a, b), t_\epsilon) \ge t_\epsilon a \left(2a^2 - 4a^2 + \frac{4}{\pi}a\right) =  t_\epsilon a^2 \left( \frac{4}{\pi} - 2a\right) > 0.
\end{equation}
Therefore the identity \eqref{eq:nonzero} can not hold and there is no optimum in the interior of the domain. Thus, we need to check the boundaries, which leads us to five cases:

\begin{enumerate}
    \item $x^2 + y^2 = \frac{1}{\pi}:$ 
    \begin{align*}
    L\left(x, \sqrt{\frac{1}{\pi} - x^2}, t_\epsilon\right)  &= 2 t_\epsilon \pi (y^2 - x^2) (1 + 2 t_\epsilon (y - x)) - 4 t_\epsilon^2 (y - x)^2  + 1 \\
    &\ge
    -\frac{3\pi}{2} |y^2 - x^2| - (y - x)^2 + 1\\
    &\ge -\frac{3\pi}{2} \left(\frac{1}{2} - \frac{1}{\pi}\right) - \left(\frac{1}{2} - \sqrt{\frac{1}{\pi} - \frac{1}{4}}\right)^2 + 1 \ge 0.
    \end{align*}
    \item $x = \frac{1}{2}$, $0 \le y \le \sqrt{\frac{1}{\pi} - \frac{1}{4}}$:
    \begin{align*}
        L(1/2, y, t_\epsilon) &= 4t_\epsilon^2 \left(y - \frac{1}{2}\right)^2\left( \pi \left(\frac{1}{2} + y\right) - 1\right) +2t_\epsilon \pi \left(y^2 - \frac{1}{4}\right) - \pi^2 \left(\frac{1}{4} + y^2\right)^2 + 2\pi \left(\frac{1}{4} + y^2\right) \\ 
        & \ge -\frac{\pi}{4} + \pi \left(\frac{1}{4} + y^2\right)  \ge 0.
    \end{align*}
    \item $y = \frac{1}{2}, 0\le x \le \sqrt{\frac{1}{\pi} - \frac{1}{4}}$:
    \begin{align*}
         L(x, 1/2, t_\epsilon) = 4t_\epsilon^2 \left(x - \frac{1}{2}\right)^2\left( \pi \left(\frac{1}{2} + x\right) - 1\right) &+2t_\epsilon \pi \left(\frac{1}{4} - x^2\right)\\
         &\quad- \pi^2 \left(\frac{1}{4} + x^2\right)^2 + 2\pi \left(\frac{1}{4} + x^2\right) \ge 0.
    \end{align*}
    \item $x = 0$:
    \begin{align*}
        L(0, y, t_\epsilon) &= 2 t_\epsilon \pi y^2 + 4t_\epsilon^2 \pi y^3 - \pi^2 y^4 + 2\pi y^2 - 4 t_\epsilon^2 y^2 \\ 
        & = y^2(-\pi^2 y^2 + 4t_\epsilon^2 \pi y + 2\pi t_\epsilon + 2\pi - 4 t_\epsilon^2) \\
        & \ge y^2\left(-\frac{\pi^2}{4} + 2\pi - 1\right) \ge 0.
    \end{align*}
    \item $y = 0$: 
    \begin{align*}
        L(x, 0, t_\epsilon) &= -2 t_\epsilon \pi x^2 + 4 t_\epsilon^2 \pi x^3 - \pi^2 x^4 + 2\pi x^2 - 4 t_\epsilon^2 x^2  \\
        & = x^2(-\pi^2 x^2 + 4t_\epsilon^2 \pi x - 2t_\epsilon \pi + 2\pi - 4t_\epsilon^2) \\
        & \ge x^2( -\frac{\pi^2}{4} + 2\pi t_\epsilon^2 - 2t_\epsilon \pi + 2\pi  - 4 t_\epsilon^2) \ge x^2\left(-\frac{\pi^2}{4} + \frac{\pi}{2} - \pi + 2\pi - 1\right) \ge 0.
    \end{align*}
\end{enumerate}
This concludes the proof. 

\subsection{Proof of Theorem~3}

Let $(\mathcal Z, \mathcal G)$ denote the measurable space on which the mechanism $Q$ generates its outputs, let $p_{\theta,\sigma^2}$ be the density of the normal distribution with mean $\theta$ and variance $\sigma^2$ and write $s_{\theta|\sigma^2}(x) = \frac{x-\theta}{\sigma^2}$ for the score in the model $\P_{\sigma^2}$. In view of Lemma~3.1 and Lemma~D.2, the Fisher-Information can be represented by
\begin{equation}\label{eq:FI-ChOfVar}
    I_{\theta}(Q\P_{\sigma^2}) = \int_{\mathcal Z} t_{\theta|\sigma^2}(z)^2 q_{\theta|\sigma^2}(z)\nu(dz),
\end{equation}
where $\nu(dz) = Q(dz|x_0)$ for some $x_0\in\R$, $q_{\theta|\sigma^2}(z) := \int_\R q(z|x) p_{\theta,\sigma^2}(x)dx$ is a $\nu$-density of the sanitized data $Z_i\thicksim QP_{\theta,\sigma^2}$, $t_{\theta|\sigma^2}(z) := \int_\R s_{\theta|\sigma^2}(x) \frac{q(z|x) p_{\theta,\sigma^2}(x)}{q_{\theta|\sigma^2}(z)}dx$ is the score at $\theta$ in the corresponding statistical model $Q\P_{\sigma^2}$ and $q : \mathcal Z\times\R \to[e^{-\epsilon}, e^{\epsilon}]$ is a measurable function such that $q(z|x)p_{\theta,\sigma^2}(x)$ is a $\nu\otimes\lambda$-density of $Q(dz|x)P_{\theta,\sigma^2}(dx)$.
By a simple change of variable, we get
$$
q_{\theta|\sigma^2}(z) = \int_\R q(z|\sigma x) p_{\frac{\theta}{\sigma},1}(x) dx
$$
and
$$
t_{\theta|\sigma^2}(z) = \frac{1}{\sigma}\int_\R \left(x-\frac{\theta}{\sigma}\right) \frac{q(z|\sigma x) p_{\frac{\theta}{\sigma},1}(x)}{q_{\theta|\sigma^2}(z)} dx = \frac{1}{\sigma} t_{\frac{\theta}{\sigma},1}(z).
$$
Hence, if we define $Q_\sigma(A|x):= Q(A|\sigma x)$, we see that $Q_\sigma\in\mathcal Q_\epsilon$ and that $q(z|\sigma x) p_{\frac{\theta}{\sigma},1}(x)$ is a $\nu\otimes\lambda$-density of $Q_\sigma(dz|x) P_{\frac{\theta}{\sigma},1}(dx)$. In particular, $QP_{\theta,\sigma^2} = Q_\sigma P_{\frac{\theta}{\sigma},1}$ and $t_{\frac{\theta}{\sigma},1}$ is a score at $\frac{\theta}{\sigma}$ in the model $Q_\sigma \P$. Therefore, we recognize \eqref{eq:FI-ChOfVar} to be equal to $\frac{1}{\sigma^2}I_{\frac{\theta}{\sigma}}(Q_\sigma\P)$.\hfill\qed


\end{document}